\pgfplotsset{compat=1.11}
\newlength\fwidth
\title{On the optimal shape parameter for kernel methods:
Sharp direct and inverse statements} 
\author[1,2]{Tizian Wenzel \thanks{\href{mailto:wenzel@math.lmu.de}{wenzel@math.lmu.de}}}
\affil[1]{Ludwig Maximilian University of Munich (Munich, Germany)}
\affil[2]{Munich Center for Machine Learning (Munich, Germany)}
\author[3]{Gabriele Santin\thanks{\href{mailto:gabriele.santin@unive.it}{gabriele.santin@unive.it}%
}} 
\affil[3]{Department of Environmental Sciences, Informatics and Statistics, Ca' Foscari University of Venice (Venice, Italy)}
\newif\iflong			
\begin{document}

\maketitle %

\begin{abstract}
The search for the optimal shape parameter for Radial Basis Function (RBF) kernel approximation has been an outstanding research problem for decades.
In this work, 
we establish a theoretical framework for this problem by leveraging a recently established theory on sharp direct, inverse and saturation statements for kernel based approximation.
In particular, we link the search for the optimal shape parameter to superconvergence phenomena.

Our analysis is carried out for finitely smooth Sobolev kernels, thereby covering large classes of radial kernels used in practice, including those emerging from current machine-learning methodologies.

Our results elucidate how approximation regimes, kernel regularity, and parameter choices interact, thereby clarifying a question that has remained unresolved for decades.
\end{abstract}

~ \newpage

\section{Introduction}
\label{sec:introduction}

Kernel methods constitute a cornerstone of modern numerical analysis
and data-driven modeling, 
with deep roots in approximation theory and statistical learning theory and with growing influence in contemporary machine learning and deep learning \cite{wendland2005scattered,steinwart2008support,fasshauer2015book}. 
Their success across such a broad range of disciplines crucially relies on the ability to encode prior structural assumptions through a suitable choice of 
the kernel. 
Consequently, the selection of an appropriate kernel is a recurring and practically relevant task, 
affecting both theoretical guarantees and empirical performances.
Within this landscape, 
Radial Basis Function (RBF) kernels form a particularly important and widely used class, 
owing to their conceptual simplicity, 
ease of implementation, and well-understood analytical properties. 
For these kernels, the problem of choosing a ``good'' kernel often reduces to selecting an optimal shape parameter, 
which governs the spatial scale or effective width of the kernel and has a decisive impact on approximation quality and stability. 
In this article, we address this issue in the setting of interpolation with finitely smooth kernels,
while discussing how the proposed theory extends to further kernel-based methods.

To be precise, we consider interpolation of continuous functions with strictly positive definite kernels $k: \Omega \times \Omega \rightarrow \R$ on some bounded Lipschitz region $\Omega \subset \R^d$.
This means that,
for any continuous function $f \in \mathcal{C}(\Omega)$ and a set of points $X \subset \Omega$,
we are interested in a kernel interpolant $s_{f, k, X}$ satisfying the interpolation conditions
\begin{align}
\label{eq:interpol_conditions}
s_{f, k, X}(x_i) = f(x_i) \qquad \forall x_i \in X.
\end{align}
For any strictly positive definite kernel $k$,
there exists a unique reproducing kernel Hilbert space $\ns$ (RKHS) of functions,
which is also called \textit{native space}.
The minimum-norm interpolant $s_{f, k, X}$ satisfying Eq.~\eqref{eq:interpol_conditions} can be expressed as
\begin{align*}
s_{f, k, X} = \sum_{j=1}^{|X|} \alpha_j k(\cdot, x_j),
\end{align*}
with $X$-dependent coefficients $(\alpha_j)_{j=1}^{|X|}$ that can be determined by the unique solution of the linear equation system arising from Eq.~\eqref{eq:interpol_conditions}.

An important class of kernels is given by translational invariant ones,
which can be written as $k(x, z) = \Phi(x-z)$ for some function $\Phi: \R^d \rightarrow \R$.
The positive definiteness of such kernels can be analyzed with help of the Fourier transform $\hat{\Phi}$.
We will be interested in finitely smooth kernels, i.e.\ it holds
\begin{align}
\label{eq:fourier_decay}
c_\Phi (1 + \Vert \omega \Vert_2^2)^{-\tau} \leq \hat{\Phi}(\omega) \leq C_\Phi (1 + \Vert \omega \Vert_2^2)^{-\tau}
\end{align}
for constants $0 < c_\Phi < C_\Phi < \infty$ and some $\tau > d/2$.
A particular subclass of translational invariant kernels is given by RBF kernels,
that can be expressed as
\begin{align}
\label{eq:rbf_kernel}
k_\varepsilon(x, z) = \varphi(\varepsilon \Vert x - z \Vert)
\end{align}
for some scalar valued shape parameter $\varepsilon > 0$ and a univariate function $\varphi: \R \rightarrow \R$.
Due to their simple implementation, these kernels are popular in applications.
Well-known examples of RBF kernels are e.g.\ the family of Matérn or Wendland kernels,
or the Gaussian kernel \cite{buhmann2003radial,wendland2005scattered}.

The choice of the shape parameter $\varepsilon > 0$ in Eq.~\eqref{eq:rbf_kernel} is a very important and frequently studied topic in kernel based approximation and applications~\cite{larsson2024scaling,sun2026optimizing}.
The reason for this importance is that this parameter crucially influences the accuracy, stability,
complexity (computational efficiency) and thus scalability of the kernel interpolant $s_{f, k, X}$.
Therefore, the optimization of $\varepsilon$ has been the topic of extensive research over three decades \cite{sun2026optimizing},
with important practical and algorithmical contributions \cite{rippa1999algorithm,fasshauer2007choosing}.
We refer to \Cref{fig:intro_placeholder} for an exemplary visualization of the $L_2(\Omega)$ error of the kernel interpolant $s_{f, k, X}$ over the shape parameter $\varepsilon$ when using the kernel $k_\varepsilon$ satisfying Eq.~\eqref{eq:rbf_kernel}.

\begin{figure}[htbp]
  \centering
\setlength\fwidth{.48\textwidth}
\begin{tikzpicture}

\definecolor{darkgray176}{RGB}{176,176,176}
\definecolor{lightgray204}{RGB}{204,204,204}

\begin{axis}[
legend cell align={left},
legend style={
  fill opacity=0.8,
  draw opacity=1,
  text opacity=1,
  at={(0.03,0.97)},
  anchor=north west,
  draw=lightgray204
},
log basis x={10},
log basis y={10},
tick align=outside,
tick pos=left,
x grid style={darkgray176},
xlabel={shape parameter $\varepsilon$},
xmajorgrids,
xmin=0.000562341325190349, xmax=177.827941003892,
xmode=log,
xtick style={color=black},
y grid style={darkgray176},
ylabel={$L_2(\Omega)$ error},
ymajorgrids,
ymin=1.36277865224791e-05, ymax=0.0159970038739627,
ymode=log,
ytick style={color=black}
]
\addplot [very thick, red]
table {%
0.001 3.85979037119366e-05
0.00112201845430196 3.85941812617023e-05
0.00125892541179417 3.85900047379448e-05
0.00141253754462275 3.85853187699836e-05
0.00158489319246111 3.85800612411563e-05
0.00177827941003892 3.85741624605391e-05
0.00199526231496888 3.85675442556846e-05
0.00223872113856834 3.85601189333068e-05
0.00251188643150958 3.85517881163411e-05
0.00281838293126445 3.85424414576961e-05
0.00316227766016838 3.85319551798239e-05
0.00354813389233575 3.8520190448604e-05
0.00398107170553497 3.85069915469192e-05
0.00446683592150963 3.84921838274309e-05
0.00501187233627272 3.8475571427241e-05
0.00562341325190349 3.84569346957778e-05
0.00630957344480193 3.84360273289948e-05
0.00707945784384138 3.84125731539153e-05
0.00794328234724281 3.83862625274044e-05
0.00891250938133746 3.83567483204019e-05
0.01 3.83236414168489e-05
0.0112201845430196 3.82865056916377e-05
0.0125892541179417 3.82448523980117e-05
0.0141253754462275 3.81981339050475e-05
0.0158489319246111 3.81457367159843e-05
0.0177827941003892 3.80869736870392e-05
0.0199526231496888 3.80210753692071e-05
0.0223872113856834 3.79471803835268e-05
0.0251188643150958 3.78643247394309e-05
0.0281838293126445 3.77714300004743e-05
0.0316227766016838 3.76672901996926e-05
0.0354813389233576 3.75505574141158e-05
0.0398107170553497 3.74197259079456e-05
0.0446683592150963 3.72731147724272e-05
0.0501187233627272 3.71088490165423e-05
0.0562341325190349 3.69248390982444e-05
0.0630957344480193 3.67187589537656e-05
0.0707945784384138 3.64880226745563e-05
0.0794328234724282 3.6229760122815e-05
0.0891250938133746 3.5940791989113e-05
0.1 3.56176051028463e-05
0.112201845430196 3.52563292677494e-05
0.125892541179417 3.48527175731793e-05
0.141253754462275 3.44021331488727e-05
0.158489319246111 3.38995468588412e-05
0.177827941003892 3.33395527521358e-05
0.199526231496888 3.27164116583721e-05
0.223872113856834 3.20241388753567e-05
0.251188643150958 3.12566606757347e-05
0.281838293126445 3.04080784280543e-05
0.316227766016838 2.94731019955582e-05
0.354813389233576 2.84477517163072e-05
0.398107170553497 2.73304907548035e-05
0.446683592150963 2.6124053289702e-05
0.501187233627272 2.48384026877375e-05
0.562341325190349 2.34955114937071e-05
0.630957344480194 2.21369815219417e-05
0.707945784384138 2.08356828426409e-05
0.794328234724282 1.9711643901068e-05
0.891250938133746 1.89479048098374e-05
1 1.87911431214358e-05
1.12201845430196 1.95125512655718e-05
1.25892541179417 2.1328719550001e-05
1.41253754462276 2.43421402894671e-05
1.58489319246111 2.85576760518968e-05
1.77827941003892 3.39450535604523e-05
1.99526231496888 4.04882591246678e-05
2.23872113856834 4.82056200609016e-05
2.51188643150958 5.71527104240085e-05
2.81838293126446 6.74200285731889e-05
3.16227766016838 7.91308840985016e-05
3.54813389233576 9.24411389651859e-05
3.98107170553497 0.0001075411191175
4.46683592150963 0.000124659766032
5.01187233627272 0.0001440712160467
5.62341325190349 0.000166104226427
6.30957344480194 0.0001911551469538
7.07945784384138 0.0002197054613205
7.94328234724282 0.0002523453058367
8.91250938133746 0.0002898048244816
10 0.0003329957407947
11.2201845430196 0.0003830661052082
12.5892541179417 0.000441471779007
14.1253754462276 0.0005100687921527
15.8489319246111 0.0005912312011866
17.7827941003892 0.0006879994276634
19.9526231496888 0.0008042642748803
22.3872113856834 0.0009449919706815
25.1188643150958 0.0011164957955647
28.1838293126446 0.0013267602288907
31.6227766016838 0.0015858239860261
35.4813389233575 0.0019062283333603
39.8107170553498 0.0023035356264028
44.6683592150963 0.0027969185357256
50.1187233627273 0.0034098108581104
56.2341325190349 0.0041705937479732
63.0957344480194 0.0051132639816516
70.7945784384139 0.00627799082669
79.4328234724282 0.0077114133868586
89.1250938133746 0.0094664617630712
100 0.0116014098974612
};
\addlegendentry{{ \scriptsize $|X| = 100$}}
\addplot [very thick, red, dashed, forget plot]
table {%
0.001 3.85979037119366e-05
0.00112201845430196 3.85979037119366e-05
0.00125892541179417 3.85979037119366e-05
0.00141253754462275 3.85979037119366e-05
0.00158489319246111 3.85979037119366e-05
0.00177827941003892 3.85979037119366e-05
0.00199526231496888 3.85979037119366e-05
0.00223872113856834 3.85979037119366e-05
0.00251188643150958 3.85979037119366e-05
0.00281838293126445 3.85979037119366e-05
0.00316227766016838 3.85979037119366e-05
0.00354813389233575 3.85979037119366e-05
0.00398107170553497 3.85979037119366e-05
0.00446683592150963 3.85979037119366e-05
0.00501187233627272 3.85979037119366e-05
0.00562341325190349 3.85979037119366e-05
0.00630957344480193 3.85979037119366e-05
0.00707945784384138 3.85979037119366e-05
0.00794328234724281 3.85979037119366e-05
0.00891250938133746 3.85979037119366e-05
0.01 3.85979037119366e-05
0.0112201845430196 3.85979037119366e-05
0.0125892541179417 3.85979037119366e-05
0.0141253754462275 3.85979037119366e-05
0.0158489319246111 3.85979037119366e-05
0.0177827941003892 3.85979037119366e-05
0.0199526231496888 3.85979037119366e-05
0.0223872113856834 3.85979037119366e-05
0.0251188643150958 3.85979037119366e-05
0.0281838293126445 3.85979037119366e-05
0.0316227766016838 3.85979037119366e-05
0.0354813389233576 3.85979037119366e-05
0.0398107170553497 3.85979037119366e-05
0.0446683592150963 3.85979037119366e-05
0.0501187233627272 3.85979037119366e-05
0.0562341325190349 3.85979037119366e-05
0.0630957344480193 3.85979037119366e-05
0.0707945784384138 3.85979037119366e-05
0.0794328234724282 3.85979037119366e-05
0.0891250938133746 3.85979037119366e-05
0.1 3.85979037119366e-05
0.112201845430196 3.85979037119366e-05
0.125892541179417 3.85979037119366e-05
0.141253754462275 3.85979037119366e-05
0.158489319246111 3.85979037119366e-05
0.177827941003892 3.85979037119366e-05
0.199526231496888 3.85979037119366e-05
0.223872113856834 3.85979037119366e-05
0.251188643150958 3.85979037119366e-05
0.281838293126445 3.85979037119366e-05
0.316227766016838 3.85979037119366e-05
0.354813389233576 3.85979037119366e-05
0.398107170553497 3.85979037119366e-05
0.446683592150963 3.85979037119366e-05
0.501187233627272 3.85979037119366e-05
0.562341325190349 3.85979037119366e-05
0.630957344480194 3.85979037119366e-05
0.707945784384138 3.85979037119366e-05
0.794328234724282 3.85979037119366e-05
0.891250938133746 3.85979037119366e-05
1 3.85979037119366e-05
1.12201845430196 3.85979037119366e-05
1.25892541179417 3.85979037119366e-05
1.41253754462276 3.85979037119366e-05
1.58489319246111 3.85979037119366e-05
1.77827941003892 3.85979037119366e-05
1.99526231496888 3.85979037119366e-05
2.23872113856834 3.85979037119366e-05
2.51188643150958 3.85979037119366e-05
2.81838293126446 3.85979037119366e-05
3.16227766016838 3.85979037119366e-05
3.54813389233576 3.85979037119366e-05
3.98107170553497 3.85979037119366e-05
4.46683592150963 3.85979037119366e-05
5.01187233627272 3.85979037119366e-05
5.62341325190349 3.85979037119366e-05
6.30957344480194 3.85979037119366e-05
7.07945784384138 3.85979037119366e-05
7.94328234724282 3.85979037119366e-05
8.91250938133746 3.85979037119366e-05
10 3.85979037119366e-05
11.2201845430196 3.85979037119366e-05
12.5892541179417 3.85979037119366e-05
14.1253754462276 3.85979037119366e-05
15.8489319246111 3.85979037119366e-05
17.7827941003892 3.85979037119366e-05
19.9526231496888 3.85979037119366e-05
22.3872113856834 3.85979037119366e-05
25.1188643150958 3.85979037119366e-05
28.1838293126446 3.85979037119366e-05
31.6227766016838 3.85979037119366e-05
35.4813389233575 3.85979037119366e-05
39.8107170553498 3.85979037119366e-05
44.6683592150963 3.85979037119366e-05
50.1187233627273 3.85979037119366e-05
56.2341325190349 3.85979037119366e-05
63.0957344480194 3.85979037119366e-05
70.7945784384139 3.85979037119366e-05
79.4328234724282 3.85979037119366e-05
89.1250938133746 3.85979037119366e-05
100 3.85979037119366e-05
};
\end{axis}

\end{tikzpicture}
\caption{Visualization of the $\Vert f - s_{f, k, X} \Vert_{L_2(\Omega)}$ error over the shape parameter $\varepsilon$.
This is a typical situation frequently reported in the literature,
which will be explained by our analysis.
In fact, the red curve is depicted again in the middle left plot of \Cref{fig:ex2_1d}.}
  \label{fig:intro_placeholder}
\end{figure}
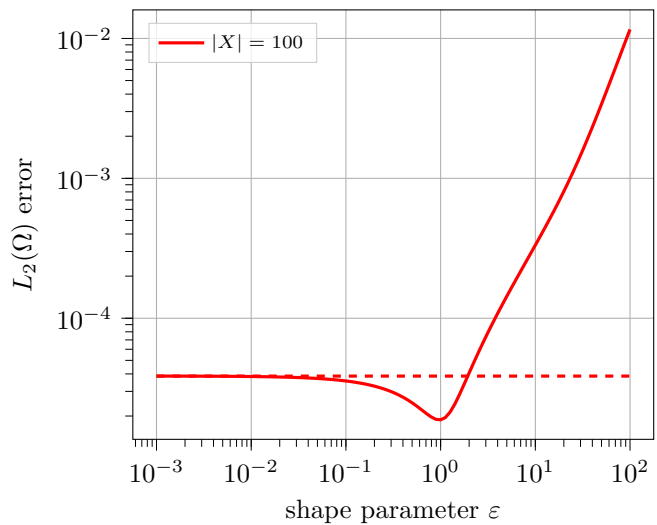

Despite its practical importance, 
only little theoretical progress has been obtained for the search of the optimal shape parameter -- even a precise definition of an optimal shape parameter is missing.
One reason for the lack of theory (with respect to accuracy) is that the shape parameter $\varepsilon > 0$ is not explicitly present in standard error estimates 
using the fill distance $h_X$, such as
\begin{align}
\label{eq:standard_bound}
\begin{aligned}
\Vert f - s_{f, k, X} \Vert_{L_2(\Omega)} &\leq C h_X^\tau \Vert f - s_{f, k, X} \Vert_{\ns} \\
&\leq C h_X^\tau \Vert f \Vert_{\ns},
\end{aligned}
\end{align}
which hold for $f \in \ns$ and kernels satisfying Eq.~\eqref{eq:fourier_decay}.
Eq.~\eqref{eq:standard_bound} suggests that the shape parameter does not significantly influence the error,
which is however apparently not true in view of extensive numerical evidence,
cf.\ \Cref{fig:intro_placeholder}.

In this article,
we address the search for an optimal shape parameter by providing a solid theoretical analysis.
This will be achieved by combining recent progress in two different areas of kernel based approximation, 
namely kernel-based superconvergence \cite{schaback1995error,schaback2018superconvergence,sloan2025doubling,karvonen2025general}
as well as sharp inverse and saturation statements for kernel based approximation 
\cite{wendland1999meshless,wenzel2025sharp,avesani2025sobolev,wenzel2026sharp}.
All together, these works established a theory of sharp direct and inverse statements and therefore a one-to-one correspondence between smoothness of a 
function and rate of approximation.
Thus, by leveraging these theoretical result we precisely characterize the notion of an \emph{asymptotically optimal} shape parameter,
and providing several implications and applications that explain e.g.\ \Cref{fig:intro_placeholder}.
In particular, our analysis is not restricted to scalar valued shape parameter as in Eq.~\eqref{eq:rbf_kernel},
but naturally holds for a broad class of kernel parametrizations.

The article is structured as follows: 
In \Cref{sec:background} we review necessary background information on kernels, 
sharp direct and inverse statements as well as literature on the optimal shape parameter.
\Cref{sec:optimal_shape_para} establishes the main result in \Cref{th:main_result},
and \Cref{sec:implications_applications} discusses the resulting implications and applications.
\Cref{sec:conclusion_outlook} concludes the article.

\section{Background}
\label{sec:background}

We start by reviewing some material on kernels (\Cref{subsec:kernels_rkhs_power}), recall recent results on sharp direct and inverse statements (\Cref{subsec:sharp_direct_inverse}), and discuss some related work (\Cref{subsec:literature_optimal_shape_para}).

\subsection{Kernels, RKHS and power spaces}
\label{subsec:kernels_rkhs_power}

\subsubsection{Kernels and RKHS}

A strictly positive definite kernel $k: \Omega \times \Omega \rightarrow \R$ is a symmetric function such that the kernel matrix $A_X := (k(x_i, x_j))_{1 \leq 
i, j \leq |X|} \in \R^{|X| \times |X|}$ 
is positive definite for any choice of pairwise disjoint points $X \subset \Omega$.
We will consider the case of $\Omega \subset \R^d$ being a Lipschitz region.
For such kernels,
there is a unique reproducing kernel Hilbert space $\ns$ (RKHS) of functions,
such that it holds
\begin{align}
\begin{aligned}
\label{eq:reproducing_property}
&k(\cdot, x) \in \ns \qquad ~ \qquad \forall x \in \Omega, \\
&f(x) = \langle f, k(\cdot, x) \rangle_{\ns} \quad \forall x \in \Omega, ~ \forall f \in \ns \hfill.
\end{aligned}
\end{align}
Assuming that the kernel $k$ is a continuous translational invariant kernel satisfying Eq.~\eqref{eq:fourier_decay} for some $\tau > d/2$, 
one can show that the RKHS is norm-equivalent to the Sobolev space of smoothness $\tau$,
i.e.\ $\ns \asymp H^\tau(\Omega)$.
This means that the spaces are equal as sets, 
and that their norms are equivalent.
However, for the subsequent analysis
we will not focus on translational invariant kernels satisfying Eq.~\eqref{eq:fourier_decay},
but directly assume that the kernel has an RKHS that is norm-equivalent to a Sobolev space.
We will call such kernels \textit{Sobolev kernels}:
\begin{assumption}
\label{ass:kernel_domain}
Let $\Omega \subset \R^d$ be a compact Lipschitz region and let $k: \Omega \times \Omega \rightarrow \R$ be a continuous kernel such that $\ns \asymp 
H^\tau(\Omega)$ for some $\tau > d/2$. 
\end{assumption}

If Eq.~\eqref{eq:fourier_decay} holds for any $\varepsilon > 0$,
it is well known \cite{larsson2024scaling} that the norm equivalence $\mathcal{H}_{k_\varepsilon}(\Omega) \asymp H^\tau(\Omega)$ holds for all choices of the shape parameter $\varepsilon > 0$ in Eq.~\eqref{eq:rbf_kernel}
-- this follows essentially from the dilation property of the Fourier transform.
This observation will be used in \Cref{ass:kernel_shape} to consider general parametrized families of Sobolev kernels.

\subsubsection{Geometric quantities}

In order to quantify the approximation error between the kernel interpolant $s_{f, k, X}$ and a target function $f \in \mathcal{C}(\Omega)$ of interest,
one considers the fill distance $h_X$ as well as the separation distance $q_X$:
\begin{align}
\begin{aligned}
\label{eq:fill_sep_dist}
h_X &:= h_{X, \Omega} := \sup_{x \in \Omega} \min_{x_j \in X} \Vert x - x_j \Vert_2, \\
q_X &:= \frac{1}{2} \min_{x_i \neq x_j \in X} \Vert x_i - x_j \Vert_2.
\end{aligned}
\end{align}
The fill and seperation distances defined in Eq.~\eqref{eq:fill_sep_dist} allow one to quantity the error between the kernel interpolant $s_{f, k, X}$ and a target
function $f \in \mathcal{C}(\Omega)$ of interest.
Such results will be reviewed in the subsequent \Cref{subsec:sharp_direct_inverse}.

In order to assess the uniformity of a point set $X \subset \Omega$,
one considers the so-called uniformity constant $\rho_X$ defined as
\begin{align}
\label{eq:uniformity_constant}
\rho_X := \frac{h_X}{q_X}.
\end{align}
A sequence $(X_n)_{n \in \N} \subset \Omega$ of point sets is called quasi-uniform
if the uniformity constants $\rho_{X_n}$ are uniformly bounded with respect to $n$.
The notion of quasi-uniform point sets is frequently considered in the literature,
and also relevant for the following analysis.
Thus we state the following assumption:

\begin{assumption}
\label{ass:points}
Let $(X_n)_{n \in \N} \subset \Omega$ be a nested sequence of quasi-uniformly distributed point sets with geometrically decaying fill distance,
s.t.\ it holds
\begin{align}
\label{eq:assumption_decay_fill_dist}
c_0' a^n \leq q_{X_n} \leq h_{X_n, \Omega} \leq c_0'' a^n
\end{align}
for constants $c_0', c_0'' > 0$ and $a \in (0, 1)$.
\end{assumption}

\subsubsection{Power spaces}

Particular super- and subspaces of the RKHS $\ns$ are of crucial importance for the subsequent theoretical analysis.
By Mercer theorem, for any positive definite kernel $k: \Omega \times \Omega \rightarrow \R$ on a compact Lipschitz region $\Omega \subset \R^d$
the integral operator $T: L_2(\Omega) \rightarrow L_2(\Omega)$ defined as
\begin{align*}
(Tf)(x) := (T_k f)(x) := \int_\Omega k(x, z) f(z) ~ \mathrm{d}z
\end{align*}
is a positive and compact operator. Its ordered eigenvalues $(\lambda_j)_{j \in \N} \subset \R_+$ and corresponding eigenfunctions $(\varphi_j)_{j \in \N} \subset L_2(\Omega)$ that
form an orthonormal basis of $L_2(\Omega)$ and are orthogonal in the RKHS $\ns$.
Using these eigenvalues and eigenfunctions,
the power spaces $(\ns)_{\vartheta \geq 0}$ \cite{steinwart2012mercer} are given as
\begin{align}
\label{eq:power_space}
(\mathcal{H}_{k}(\Omega))_\vartheta := \left\{ f \in L_2(\Omega) ~:~ \sum_{j=1}^\infty \frac{|\langle f, \varphi_{j} 
\rangle_{L_2(\Omega)}|^2}{\lambda_{j}^\vartheta} < \infty \right\} \subset L_2(\Omega).
\end{align}
The inclusions $(\mathcal{H}_{k}(\Omega))_{\vartheta_1} \hookrightarrow (\mathcal{H}_{k}(\Omega))_{\vartheta_2}$ for $\vartheta_1 > \vartheta_2$ are obvious,
and it holds
$(\mathcal{H}_{k}(\Omega))_0 = L_2(\Omega)$,
$(\mathcal{H}_{k}(\Omega))_1 = \ns$ and $(\mathcal{H}_{k}(\Omega))_2 = TL_2(\Omega)$.
We will only consider the power spaces for $\vartheta \geq 0$,
thus dealing with subspaces of $L_2(\Omega)$.
A visualization of these power spaces is given in \Cref{fig:visualization}.

Under \Cref{ass:kernel_domain}, 
i.e.\ $\ns \asymp H^\tau(\Omega)$,
we obtain by real interpolation between $L_2(\Omega)$ and $\ns \asymp H^\tau(\Omega)$ that
\begin{align}
\label{eq:power_spaces_escaping}
(\mathcal{H}_{k}(\Omega))_\vartheta \asymp H^{\vartheta \tau}(\Omega) \qquad \text{for } \vartheta \in [0, 1],
\end{align}
i.e.\ the power spaces essentially only differ for $\vartheta > 1$,
where one has the subset relation
\begin{align}
\label{eq:power_spaces_superconv}
(\mathcal{H}_k(\Omega))_\vartheta \subset H^{\vartheta \tau}(\Omega) \qquad \text{for } \vartheta \in [1, \infty).
\end{align}
As elaborated in \cite{karvonen2025general},
the spaces $(\mathcal{H}_k(\Omega))_\vartheta$ for $\vartheta > 1$ are expected to be characterized by the Sobolev smoothness $\vartheta \tau$ and possibly 
additional \emph{boundary} conditions,
though this is not yet fully understood.

\subsection{Sharp direct and inverse statements}
\label{subsec:sharp_direct_inverse}

We recall the following direct statement,
which is a summary of a long history of work which covered both the escaping the native space regime 
\cite{narcowich2005sobolev,wendland2005approximate,narcowich2006sobolev,arcangeli2007extension} (abbreviated as \textit{escaping regime} in the following)
as well as the superconvergence regime \cite{schaback1999improved,schaback2018superconvergence,sloan2025doubling,karvonen2025general}.

\begin{theorem}
\label{th:result_direct}
Under \Cref{ass:kernel_domain},
consider $f \in \mathcal{C}(\Omega)$ such that $f \in (\ns)_\vartheta$ for some $\vartheta \in (\frac{d}{2\tau}, 2]$ 
Let $\rho_0 > 0$.
Then there exists a constant $c_f > 0$ such that it holds
\begin{align*}
\Vert f - s_{f, k, X} \Vert_{L_2(\Omega)} \leq c_f
h_X^{\tau \min(\vartheta, 2)}
\end{align*}
for any set of quasi-uniform points $X \subset \Omega$ with $\frac{h_X}{q_X} \leq \rho_0$ and $h_X\leq 1$.
\end{theorem}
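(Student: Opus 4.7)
The plan is to treat the two regimes separated by $\vartheta=1$ essentially independently and then glue them together at the endpoint, using the fact that the approximation map $E_X \colon f \mapsto f - s_{f,k,X}$ is a bounded linear operator between the function spaces involved. Since $\vartheta\in (d/(2\tau),2]$, we always have $\tau\min(\vartheta,2)=\tau\vartheta$, so the target rate is $h_X^{\tau\vartheta}$.

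For the \emph{escaping regime} $\vartheta\in(d/(2\tau),1]$, I would rely on the norm equivalence in Eq.~\eqref{eq:power_spaces_escaping} to identify $(\mathcal{H}_k(\Omega))_\vartheta$ with $H^{\vartheta\tau}(\Omega)$, and then invoke the Narcowich--Ward--Wendland / Arcangéli--López de Silanes--Torrens Sobolev sampling inequality: since $\vartheta\tau>d/2$ and the error $f-s_{f,k,X}$ vanishes on $X$, one obtains
\begin{align*}
\|f-s_{f,k,X}\|_{L_2(\Omega)} \;\lesssim\; h_X^{\vartheta\tau}\,\|f-s_{f,k,X}\|_{H^{\vartheta\tau}(\Omega)}.
\end{align*}
The right-hand norm is then controlled by $\|f\|_{H^{\vartheta\tau}(\Omega)}$ up to a constant depending only on $\rho_0$, using quasi-uniformity together with an inverse/Bernstein estimate for the kernel interpolant (this is where the assumption $h_X/q_X\le\rho_0$ enters and where the bound on $h_X$ keeps constants in the sampling inequality bounded).

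For the \emph{superconvergence regime} $\vartheta\in[1,2]$, I would first establish the endpoint $\vartheta=2$ via an Aubin--Nitsche duality argument. Writing $f=Tg$ with $g\in L_2(\Omega)$, one has
\begin{align*}
\|f-s_{f,k,X}\|_{L_2(\Omega)} = \sup_{\|g\|_{L_2}=1} \langle f-s_{f,k,X},g\rangle_{L_2(\Omega)},
\end{align*}
and then rewrites the inner product using $T$ and the reproducing property to obtain an expression of the form $\langle f-s_{f,k,X},\,Tg-s_{Tg,k,X}\rangle_{\mathcal{H}_k(\Omega)}$, exploiting that the interpolation residual is orthogonal in the RKHS to the interpolant space. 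Applying the native space direct estimate (the $\vartheta=1$ case) to both factors yields two powers of $h_X^\tau$, producing the doubled rate $h_X^{2\tau}$. For intermediate $\vartheta\in(1,2)$, I would conclude by real interpolation between the two endpoints $\vartheta=1$ and $\vartheta=2$, using that $E_X$ is bounded simultaneously from $(\mathcal{H}_k)_1$ with rate $h_X^\tau$ and from $(\mathcal{H}_k)_2$ with rate $h_X^{2\tau}$, so the interpolated operator inherits the rate $h_X^{\vartheta\tau}$.

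The main technical obstacle is the Aubin--Nitsche-type doubling step at $\vartheta=2$: one has to make sure that the duality argument actually produces a clean orthogonality-based factorization, and that the test-function side $Tg$ may be interpolated with the base-rate bound uniformly in $g$, which is exactly where the quasi-uniformity assumption is essential. A secondary delicate point is the real interpolation between $\mathcal{H}_k(\Omega)$ and $T L_2(\Omega)$ in the superconvergence regime, since $T L_2(\Omega)$ may implicitly encode boundary behaviour not captured by $H^{2\tau}(\Omega)$ (as noted after Eq.~\eqref{eq:power_spaces_superconv}); however, since we work intrinsically with the power spaces $(\mathcal{H}_k)_\vartheta$ rather than with Sobolev spaces on that side, the interpolation identity $[(\mathcal{H}_k)_1,(\mathcal{H}_k)_2]_{\vartheta-1}=(\mathcal{H}_k)_\vartheta$ follows directly from the spectral characterization in Eq.~\eqref{eq:power_space}, which circumvents the boundary issue.
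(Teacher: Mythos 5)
Your route is genuinely different from the paper's: the paper proves this theorem essentially by assembling citations, taking the escaping regime from \cite{narcowich2006sobolev} and \cite{arcangeli2007extension} and the superconvergence regime from \cite{karvonen2025general}, plus an appendix argument, while you propose a self-contained proof (sampling inequality plus stability in the escaping regime, a doubling argument at $\vartheta=2$, and operator interpolation along the power-space scale for intermediate $\vartheta$). The interpolation step for $\vartheta\in(1,2)$ is sound and correctly works intrinsically with the spectral characterization of $(\mathcal{H}_{k}(\Omega))_\vartheta$, sidestepping the boundary-condition issue of Eq.~\eqref{eq:power_spaces_superconv}. However, there is a genuine gap in the escaping regime: the zeros/sampling inequality is kernel-free, but the stability bound $\Vert f - s_{f,k,X}\Vert_{H^{\vartheta\tau}(\Omega)} \lesssim \Vert f\Vert_{H^{\vartheta\tau}(\Omega)}$, which you invoke via ``quasi-uniformity together with an inverse/Bernstein estimate,'' is exactly the content of \cite{narcowich2006sobolev}, and those results (and the Bernstein inequalities behind them) are formulated for translation-invariant/RBF kernels, whereas \Cref{ass:kernel_domain} only requires $\mathcal{H}_{k}(\Omega)\asymp H^\tau(\Omega)$. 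The paper closes precisely this gap in \Cref{sec:additional_proofs}, by comparing $s_{f,k,X}$ with the interpolant for an auxiliary translation-invariant reproducing kernel $k'$ of $H^\tau(\Omega)$, bounding the discrepancy via \cite{narcowich2005sobolev}, and controlling the growth of $\Vert s_{f,k',X}\Vert_{\mathcal{H}_{k'}(\Omega)}$ by a Bernstein-type bound from \cite{avesani2025sobolev} (extended to non-radial kernels in \cite{wenzel2026sharp}). Without such a comparison argument, your plan does not cover the stated generality of the theorem.

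A second, smaller misstep is in the duality step at $\vartheta=2$. After using RKHS orthogonality to reach $\langle f - s_{f,k,X},\, Tg - s_{Tg,k,X}\rangle_{\mathcal{H}_{k}(\Omega)}$ and applying Cauchy--Schwarz, both factors are \emph{native-space} norms of residuals; the $\vartheta=1$ direct estimate bounds the $L_2(\Omega)$ norm of a residual by $h_X^\tau$ times the native norm of the interpolated function, so it cannot be applied to either factor as you claim, and routes that convert one factor back to $L_2(\Omega)$ recover only a single power of $h_X^\tau$. The mechanism that actually works is the classical half-doubling of \cite{schaback1999improved}: for $u = Tv$, orthogonality gives $\Vert u - s_{u,k,X}\Vert_{\mathcal{H}_{k}(\Omega)}^2 = \langle u - s_{u,k,X}, v\rangle_{L_2(\Omega)} \le \Vert u - s_{u,k,X}\Vert_{L_2(\Omega)}\,\Vert v\Vert_{L_2(\Omega)}$, which combined with the sampling inequality applied to the residual yields $\Vert u - s_{u,k,X}\Vert_{\mathcal{H}_{k}(\Omega)}\le C h_X^{\tau}\Vert v\Vert_{L_2(\Omega)}$ and then $\Vert u - s_{u,k,X}\Vert_{L_2(\Omega)}\le C^2 h_X^{2\tau}\Vert v\Vert_{L_2(\Omega)}$; the duality wrapper is unnecessary. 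This is fixable, but as written your factorization does not produce the doubled rate.
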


\begin{proof}
This is a combination of the direct statements from the escaping regime $\vartheta \in (\frac{d}{2\tau}, 1]$ from \cite[Theorem 4.2]{narcowich2006sobolev} and \cite[Theorem 
4.1]{arcangeli2007extension},
and the superconvergence regime $\vartheta \in [1, 2]$ \cite{karvonen2025general}:

In the superconvergence regime, 
due to the nestedness of the power spaces,
it holds $(\ns)_\vartheta \hookrightarrow (\ns)_2$
for all $\vartheta > 2$.
Thus the convergence rate $2\tau$ holds for $\vartheta > 2$,
resulting in the overall notation $\tau \min(\vartheta, 2)$.

In the escaping regime, where in particular $\vartheta<1$ and thus $\vartheta\tau<\tau$,
we remark that the article~\cite{narcowich2006sobolev} assumes that the kernel is translation-invariant. 
However, these results can be easily extended to any Sobolev kernel.
The details are provided in \Cref{sec:additional_proofs}.
\end{proof}

We recall the following inverse and saturation statement from \cite{wenzel2026sharp},
which summarized several works for the escaping regime \cite{schaback2002inverse,wenzel2025sharp,avesani2025sobolev} as well as the superconvergence regime 
\cite{wenzel2026sharp}.

\begin{theorem}
\label{th:result_inverse}
Under \Cref{ass:kernel_domain},
consider $f \in \mathcal{C}(\Omega)$ and the estimate 
\begin{align}
\label{eq:inverse_statement_assumption}
\Vert f - s_{f, k, X} \Vert_{L_2(\Omega)} \leq c_f h_{X}^{\vartheta \tau}
\end{align}
\begin{enumerate}[label=(\roman*)]
\item If Eq.~\eqref{eq:inverse_statement_assumption} holds for some $\vartheta \in (0, 2 - \frac{d}{2\tau}]$ for a sequence $(X_n)_{n \in \N} \subset \Omega$ of 
point sets satisfying \Cref{ass:points},
then $f \in (\ns)_{\vartheta'}$ for all $\vartheta' \in [0, \vartheta)$.
\item If Eq.~\eqref{eq:inverse_statement_assumption} holds for some $\vartheta \in [2 - \frac{d}{2\tau}, 2]$ 
for any quasi-uniform sequence $(X_n)_{n \in \N} \subset \Omega$,
then $f \in (\ns)_{\vartheta'}$ for all $\vartheta' \in [0, \vartheta)$.
\item If Eq.~\eqref{eq:inverse_statement_assumption} holds for some $\vartheta > 2$ 
for any quasi-uniform sequence $(X_n)_{n \in \N} \subset \Omega$,
then $f = 0$.
\end{enumerate}
\end{theorem}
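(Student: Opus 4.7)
My plan is a multi-level telescoping argument coupled with a Bernstein (inverse) inequality in the power spaces. Fix a sequence $(X_n)_{n \in \mathbb{N}}$ satisfying Assumption~\ref{ass:points} — in case (i) this is the sequence provided by the hypothesis, while in cases (ii) and (iii) I would exploit the freedom to pick any nested, geometrically refined, quasi-uniform sequence, since the rate is assumed to hold on every quasi-uniform sequence. Set $s_n := s_{f,k,X_n}$ and $u_n := s_{n+1}-s_n$. The triangle inequality and~\eqref{eq:inverse_statement_assumption} give
\[
\|u_n\|_{L_2(\Omega)} \;\leq\; \|f-s_{n+1}\|_{L_2(\Omega)} + \|f-s_n\|_{L_2(\Omega)} \;\lesssim\; a^{n\vartheta\tau},
\]
and the telescope $f = s_0 + \sum_{n=0}^\infty u_n$ converges in $L_2(\Omega)$. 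The goal is to upgrade this convergence to the $(\mathcal{H}_k(\Omega))_{\vartheta'}$ norm for every $\vartheta' < \vartheta$.

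The pivotal technical tool is a Bernstein-type inequality of the form
\[
\|v\|_{(\mathcal{H}_k(\Omega))_{\vartheta'}} \;\leq\; C\, q_Y^{-\vartheta'\tau}\,\|v\|_{L_2(\Omega)}
\]
for every $v$ in the kernel span of the combined point set $Y := X_n \cup X_{n+1}$. Granting this and using $q_Y \asymp a^n$ from quasi-uniformity yields $\|u_n\|_{(\mathcal{H}_k)_{\vartheta'}} \lesssim a^{n(\vartheta-\vartheta')\tau}$, which is summable precisely when $\vartheta' < \vartheta$. Absolute convergence of the telescope in $(\mathcal{H}_k(\Omega))_{\vartheta'}$ then delivers $f \in (\mathcal{H}_k(\Omega))_{\vartheta'}$, establishing (i) and (ii).

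The main obstacle is proving the Bernstein inequality uniformly over the required range of $\vartheta'$. For $\vartheta'\in[0,1]$ the Sobolev equivalence~\eqref{eq:power_spaces_escaping} reduces the estimate to a classical inverse inequality for kernel spans on quasi-uniform point clouds, which is robust enough to work along a single geometrically decaying sequence. For $\vartheta'>1$ the power space is only embedded into (rather than norm-equivalent to) a Sobolev space, cf.~\eqref{eq:power_spaces_superconv}, and the inverse estimate has to be derived spectrally via the Mercer expansion. This extra spectral input is what forces the hypothesis in (ii) to apply across \emph{every} quasi-uniform sequence, and the exponent $2-d/(2\tau)$ is the precise value at which a single-sequence Sobolev-style Bernstein bound ceases to suffice.

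For the saturation part (iii) I would argue by contradiction. Pushing the same telescoping scheme to $\vartheta'\in(2,\vartheta)$ would formally place $f$ in $(\mathcal{H}_k)_{\vartheta'}$, but Theorem~\ref{th:result_direct} caps the attainable $L_2$ rate on elements of $(\mathcal{H}_k)_2$ at $h_X^{2\tau}$. To turn this cap into a contradiction, I would combine it with a matching lower bound $\|f-s_n\|_{L_2(\Omega)} \gtrsim h_{X_n}^{2\tau}$ whenever $f \neq 0$, obtained by expanding $f$ in the Mercer basis and using that $s_n$ is the $\mathcal{H}_k$-orthogonal projection of $f$ onto $\mathrm{span}\{k(\cdot,x):x\in X_n\}$; a rate strictly faster than $h^{2\tau}$ then forces every Mercer coefficient of $f$ to vanish, i.e.\ $f=0$.
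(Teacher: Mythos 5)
Your proposal attempts considerably more than the paper itself does: the paper's ``proof'' of this theorem is a citation, since the statement is exactly the main result of \cite{wenzel2026sharp} specialized to interpolation (with the reparametrization $\beta=\vartheta\tau$). Your telescoping scheme ($f=s_0+\sum_n(s_{n+1}-s_n)$ plus a kernel-span Bernstein inequality) is indeed the classical mechanism behind inverse estimates in the escaping regime, and with $\vartheta'\le 1$ the inequality $\Vert v\Vert_{(\mathcal{H}_k(\Omega))_{\vartheta'}}\le Cq_Y^{-\vartheta'\tau}\Vert v\Vert_{L_2(\Omega)}$ does follow from the standard native-space Bernstein bound together with the interpolation-space identity in Eq.~\eqref{eq:power_spaces_escaping}. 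But beyond that your argument has two genuine gaps. First, the pivotal Bernstein inequality is simply asserted: for $\vartheta'\in(1,2-\frac{d}{2\tau})$ it is precisely the hard new content of \cite{wenzel2025sharp,avesani2025sobolev,wenzel2026sharp}, and for $\vartheta'\ge 2-\frac{d}{2\tau}$ it is \emph{false} in general, because a kernel translate $k(\cdot,x)=T_k\delta_x$ with $\delta_x\in H^{-s}(\Omega)$ only for $s>d/2$ generically lies in $(\mathcal{H}_k(\Omega))_{\vartheta'}$ only for $\vartheta'<2-\frac{d}{2\tau}$; the left-hand side is then typically infinite for elements of the kernel span. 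Hence ``granting'' the inequality cannot establish case (ii), and the ``spectral derivation via the Mercer expansion'' you appeal to cannot exist in that range.

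Second, in cases (ii) and (iii) you immediately fix one nested, geometric, quasi-uniform sequence and run the whole argument on it. This discards exactly the extra strength of the hypothesis that makes these statements true: as the paper itself points out after \Cref{th:result_inverse}, a single quasi-uniform sequence can give an arbitrarily fast rate for a nonzero function, so no single-sequence argument can prove (ii) for $\vartheta'\ge 2-\frac{d}{2\tau}$ or the saturation statement (iii). In particular your proposed lower bound $\Vert f-s_{f,k,X_n}\Vert_{L_2(\Omega)}\gtrsim h_{X_n}^{2\tau}$ for every $f\neq 0$ on a fixed sequence is false, and the claim that a faster rate ``forces every Mercer coefficient of $f$ to vanish'' is the saturation theorem itself rather than a proof of it. Any correct argument must actively exploit the validity of Eq.~\eqref{eq:inverse_statement_assumption} across \emph{all} quasi-uniform sequences (e.g.\ by varying or perturbing the point sets), which your sketch never does; this is the essential missing idea, and it is the reason the result required the dedicated treatment in \cite{wenzel2026sharp}.
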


\begin{proof}
The statement and proof are the main result of \cite{wenzel2026sharp}, 
for the special case of kernel interpolation.
Note the mild reparametrization on the convergence rate (we use $\vartheta \tau$ instead of $\beta$ to highlight the similarity to \Cref{th:result_direct}).
\end{proof}

We briefly elaborate on the sharpness of the results of \Cref{th:result_direct} and \Cref{th:result_inverse},
as already discussed in \cite{wenzel2026sharp}:
Given some $f \in \mathcal{H}_\vartheta(\Omega)$ for $\vartheta \in (\frac{d}{2\tau}, 2]$,
\Cref{th:result_direct} yields a convergence as $h_X^{\vartheta \tau}$.
On the contrary,
if some continuous function $f$ can be approximated with a rate of $h_X^{\vartheta \tau}$,
\Cref{th:result_inverse} gives that $f \in \mathcal{H}_{\vartheta'}(\Omega)$ for all $\vartheta' < \vartheta$.
Thus, both the direct and inverse results are sharp, 
and in particular a one-to-one correspondence between smoothness (in terms of the power space parameter $\vartheta$)
and approximation rate is established for the range $\vartheta \in (0, 2]$.

In order to formalize this,
we define the notion of a rate of approximation and the notion of power space smoothness in the setting of \Cref{ass:kernel_domain}:

\begin{definition}
\label{def:rate_of_approx}
We call $\beta_0 > 0$ the rate of approximation for approximating $f \in \mathcal{C}(\Omega)$ using the kernel $k$,
if for any $\beta < \beta_0$ it holds 
\begin{align*}
\Vert f - s_{f, X} \Vert_{L_2(\Omega)} \leq c_{f, \beta} h_X^\beta
\end{align*}
for some $c_{f, \beta} > 0$ and any set of quasi-uniform points $X \subset \Omega$,
and there exists no larger $\beta_0' > \beta_0$ such that this statement remains true.
\end{definition}

\begin{definition}
\label{def:power_space_smoothness}
We call $\vartheta_0 > 0$ the power space smoothness of $f \in \mathcal{C}(\Omega)$ using the kernel $k$,
if for any $\vartheta < \vartheta_0$ it holds 
\begin{align*}
f \in (\ns)_{\vartheta},
\end{align*}
and there exists no larger $\vartheta_0' > \vartheta_0$ such that this statement remains true.
\end{definition}

Using \Cref{def:rate_of_approx} and \Cref{def:power_space_smoothness},
the previously explained one-to-one correspondence can be stated as follows:
\begin{theorem}[One-to-one correspondence]
\label{th:one_to_one}
Under \Cref{ass:kernel_domain}, if $\vartheta \in (\frac{d}{2\tau}, 2]$,
then it holds for $f \in \mathcal{C}(\Omega)$:
\begin{align*}
&\text{$f$ has power space smoothness $\vartheta$ (with respect to the kernel $k$)} \\
\Leftrightarrow~
&\text{$f$ can be approximated with rate $\vartheta \tau$ (with respect to the kernel $k$)}
\end{align*}
\end{theorem}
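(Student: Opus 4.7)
The plan is to obtain the biconditional as an immediate consequence of the sharp direct statement (Theorem~\ref{th:result_direct}) and the sharp inverse/saturation statement (Theorem~\ref{th:result_inverse}). Definitions~\ref{def:rate_of_approx} and \ref{def:power_space_smoothness} essentially serve as ``supremum bookkeeping'' devices: each implication then decomposes into a lower bound produced by one of the two theorems and a sharpness bound obtained by contradiction from the other.

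For $(\Rightarrow)$, I fix any $\beta < \vartheta \tau$; by Definition~\ref{def:power_space_smoothness} one can pick $\vartheta' \in (\max(\beta/\tau, d/(2\tau)), \vartheta) \subset (d/(2\tau), 2]$ with $f \in (\ns)_{\vartheta'}$, and Theorem~\ref{th:result_direct} yields $\Vert f - s_{f, k, X}\Vert_{L_2(\Omega)} \leq c\, h_X^{\vartheta' \tau} \leq c\, h_X^\beta$ for every quasi-uniform $X$ in the asymptotic regime $h_X \leq 1$; the bounded range $h_X > 1$ is absorbed into the constant. For sharpness, suppose the rate were some $\beta_0 > \vartheta \tau$. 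If $\beta_0 \leq 2\tau$, pick $\beta \in (\vartheta \tau, \beta_0)$; since the rate bound holds along every quasi-uniform sequence, in particular along ones satisfying Assumption~\ref{ass:points}, Theorem~\ref{th:result_inverse}(i) or (ii) delivers $f \in (\ns)_{\vartheta''}$ for some $\vartheta'' > \vartheta$, contradicting the maximality in Definition~\ref{def:power_space_smoothness}. If $\beta_0 > 2\tau$, choose $\beta > 2\tau$ and apply Theorem~\ref{th:result_inverse}(iii) to deduce $f = 0$, which is incompatible with having a finite power space smoothness, as the zero function lies in every power space.

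The implication $(\Leftarrow)$ is symmetric with the two theorems swapped. For the lower bound on smoothness, given any $\vartheta' < \vartheta$ I would pick $\beta \in (\vartheta' \tau, \vartheta \tau)$, so $\beta/\tau \in (d/(2\tau), 2]$; the assumed rate bound feeds into Theorem~\ref{th:result_inverse}(i) or (ii) and yields $f \in (\ns)_{\vartheta'}$ via $\vartheta' < \beta/\tau$. For sharpness, if the smoothness were some $\vartheta_0 > \vartheta$, then picking $\vartheta^\star \in (\vartheta, \min(\vartheta_0, 2))$ (and appealing to the nestedness of power spaces in the superconvergence regime when $\vartheta_0 > 2$) and applying Theorem~\ref{th:result_direct} produces a rate $\vartheta^\star \tau > \vartheta \tau$, contradicting Definition~\ref{def:rate_of_approx}.

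I expect the main obstacle to be the careful handling of the saturation boundary at $\vartheta = 2$, where Theorem~\ref{th:result_inverse}(iii) may need to be invoked and the resulting $f = 0$ must be ruled out by arguing that it admits no finite power space smoothness. A secondary technical point is that Definition~\ref{def:rate_of_approx} formally requires the rate bound for every $\beta < \beta_0$, including the range $\beta \leq d/2$ which lies below the direct theorem's hypothesis $\vartheta' > d/(2\tau)$; this is resolved by the monotonicity $h_X^{\beta_1} \leq h_X^\beta$ for $\beta \leq \beta_1$ and $h_X \leq 1$, lifting the inequality from any slightly larger admissible exponent down to the desired one.
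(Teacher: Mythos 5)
Your overall route is the same as the paper's: the paper states \Cref{th:one_to_one} as a direct formalization of the preceding sharpness discussion and offers no separate proof beyond ``combine \Cref{th:result_direct} with \Cref{th:result_inverse}'', which is exactly what you do, only with the supremum bookkeeping of \Cref{def:rate_of_approx} and \Cref{def:power_space_smoothness} made explicit. Your $(\Rightarrow)$ direction is fine (including the case $\beta_0>2\tau$, where the saturation statement and the observation that $f=0$ has no finite power space smoothness do the job), and your $(\Leftarrow)$ direction is fine for $\vartheta\in(\tfrac{d}{2\tau},2)$.

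There is, however, a genuine gap at the endpoint $\vartheta=2$ in the $(\Leftarrow)$ sharpness step. If $\vartheta=2$ and the power space smoothness were some $\vartheta_0>2$, your interval $(\vartheta,\min(\vartheta_0,2))$ is empty, and the parenthetical appeal to nestedness cannot rescue the argument: nestedness only yields membership in \emph{lower} power spaces, and \Cref{th:result_direct} saturates at the rate $2\tau=\vartheta\tau$, so no rate exceeding $\vartheta\tau$ — and hence no contradiction with \Cref{def:rate_of_approx} — can be produced. In fact, under a literal reading of the two definitions this implication cannot be proved at $\vartheta=2$: take $f=\varphi_1$, a Mercer eigenfunction (or any nonzero continuous $f\in(\ns)_{\vartheta'}$ for all $\vartheta'<3$, say). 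By \Cref{th:result_direct} it satisfies the bound for every $\beta\leq 2\tau$, and by \Cref{th:result_inverse}(iii) no $\beta_0'>2\tau$ is admissible, so its rate of approximation is exactly $2\tau$; yet its power space smoothness is not $2$ (for $\varphi_1$ it is not even finite). So the $\vartheta=2$ case of the equivalence requires either excluding the endpoint, or reading the statements one-sidedly there (smoothness at least $\vartheta$ versus rate at least $\vartheta\tau$); your proof as written silently papers over this, while flagging the wrong place (the $(\Rightarrow)$ use of the saturation statement, which you actually handle correctly) as the delicate point. A minor additional remark: the uniformity of the constant in your $(\Rightarrow)$ lower bound over ``any quasi-uniform set'' inherits the looseness of \Cref{def:rate_of_approx} itself (the constant in \Cref{th:result_direct} depends on the bound $\rho_0$), so this is not an error on your side but worth acknowledging.
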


For $\vartheta > 2$,
the saturation statement of \Cref{th:result_inverse} holds:
It states that there are no non-trivial functions that can be approximated with rate larger than $2 \tau$ using all sequences of quasi-uniform points.
It is important to note that this saturation statement does not preclude having an arbitrary large convergence rate for a single, particular sequence of quasi-uniform point sets $(X_n)_{n \in \N}
\subset \Omega$. 

\begin{table}[h!]
\centering
 \begin{tabular}{||c|| c  c||} 
 \hline
 & Escaping regime & Superconvergence regime \\ [0.5ex] 
 \hline\hline
Direct statements & \cite{narcowich2005sobolev,wendland2005approximate,narcowich2006sobolev,arcangeli2007extension} & 
\cite{schaback1999improved,schaback2018superconvergence,sloan2025doubling,karvonen2025general} \\ 
Inverse statements & \cite{schaback2002inverse,wenzel2025sharp,avesani2025sobolev} & \cite{wenzel2026sharp} \\ [1ex] 
 \hline
 \end{tabular}
 \caption{Overview of direct and inverse statements for both the escaping (the native space) and superconvergence regime.}
 \label{table:overview_results}
\end{table}

\begin{figure}[t]
\setlength\fwidth{.4\textwidth}
\begin{center}
\begin{tikzpicture}[>=latex, thick]

\begin{scope}[on background layer]
  \fill[blue!10, opacity=0.4]
    (0,-2.7cm) rectangle (5.5cm,0.5cm);

  \fill[green!10, opacity=0.4]
    (5.5cm,-2.7cm) rectangle (11cm,0.5cm);
\end{scope}

\node[blue!60!black] at (3.25cm,0.8cm)
  {\large Escaping regime};

\node[green!60!black] at (8.25cm,0.8cm)
  {\large Superconvergence regime};

\draw[->] (0,0) -- (11cm,0) node [right] {$\R$};

\draw (1.5,-3pt) -- (1.5,3pt) node[above=1pt]{0};
\draw (1.5,-2pt) node[below=0pt, align=center, fill=lightgray, rounded corners, inner sep=2pt]{$L_2(\Omega)$};

\draw (3.8,-3pt) -- (3.8,3pt) node[above=0pt]{\scriptsize $\vartheta$};
\draw (3.8, -2pt) node[below=0pt, align=center, fill=lightgray, rounded corners, inner sep=2pt]{\scriptsize $(\mathcal{H}_{k}(\Omega))_{\vartheta}$};

\draw (5.5,-2pt) -- (5.5,2pt) node[above=1pt]{1};
\draw (5.5, -2pt) node[below=0pt, align=center, fill=lightgray, rounded corners, inner sep=2pt]{$\mathcal{H}_{k}(\Omega)$};

\draw (7.8,-3pt) -- (7.8,3pt) node[above=0pt]{\scriptsize $\tilde{\vartheta}$};
\draw (7.8, -2pt) node[below=0pt, align=center, fill=lightgray, rounded corners, inner sep=2pt]{\scriptsize $(\mathcal{H}_{k}(\Omega))_{\tilde{\vartheta}}$};

\draw (9.5,-3pt) -- (9.5,3pt) node[above=1pt]{2};
\draw (9.5, -2pt) node[below=0pt, align=center, fill=lightgray, rounded corners, inner sep=2pt]{$T_{k}L_2(\Omega)$}; 

\draw[->] (0,-2cm) -- (11cm,-2cm) node [right] {$\R$};

\draw (1.5,-2cm+3pt) -- (1.5,-2cm-3pt) node[below=1pt]{0};
\draw (1.5,-2cm+2pt) node[above=0pt, align=center, fill=lightgray, rounded corners, inner sep=2pt]{$L_2(\Omega)$};

\draw (3.8,-2cm+3pt) -- (3.8,-2cm-3pt) node[below=0pt]{\scriptsize $\vartheta \tau$};
\draw (3.8, -2cm+2pt) node[above=0pt, align=center, fill=lightgray, rounded corners, inner sep=2pt]{\scriptsize $H^{\vartheta \tau}(\Omega)$};

\draw (5.5,-2cm+3pt) -- (5.5,-2cm-3pt) node[below=1pt]{$\tau$};
\draw (5.5, -2cm+2pt) node[above=0pt, align=center, fill=lightgray, rounded corners, inner sep=2pt]{$H^\tau(\Omega)$};

\draw (7.8,-2cm+3pt) -- (7.8,-2cm-3pt) node[below=0pt]{\scriptsize $\tilde{\vartheta}\tau$};
\draw (7.8, -2cm+2pt) node[above=0pt, align=center, fill=lightgray, rounded corners, inner sep=2pt]{\scriptsize $H^{\tilde{\vartheta} \tau}(\Omega)$};

\draw (9.5,-2cm+3pt) -- (9.5,-2cm-3pt) node[below=1pt]{$2\tau$};
\draw (9.5, -2cm+2pt) node[above=0pt, align=center, fill=lightgray, rounded corners, inner sep=2pt]{$H^{2\tau}(\Omega)$};

\draw (1.5, -1cm) node[align=center]{\rotatebox[origin=c]{270}{$=$}};
\draw (3.8, -1cm) node[align=center]{\rotatebox[origin=c]{270}{$\asymp$}};
\draw (5.5, -1cm) node[align=center]{\rotatebox[origin=c]{270}{$\asymp$}};
\draw (7.8, -1cm) node[align=center]{\rotatebox[origin=c]{270}{$\subseteq$}};
\draw (9.5, -1cm) node[align=center]{\rotatebox[origin=c]{270}{$\subseteq$}};

\end{tikzpicture}
\end{center}
\caption{Visualization of the scale of power spaces (top arrow) and Sobolev spaces (bottom arrow).
Several special cases like $L_2(\Omega)$, $\ns \asymp H^{\tau}(\Omega)$ and $T_k L_2(\Omega) \subset H^{2\tau}(\Omega)$ are depicted,
as well as the equivalence and subset relations of Eq.~\eqref{eq:power_spaces_escaping} and \eqref{eq:power_spaces_superconv}.
}
\label{fig:visualization}
\end{figure}
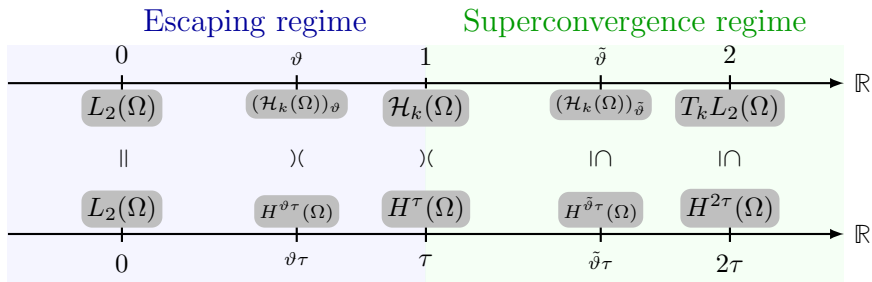

\subsection{Related work on the optimal shape parameter} %
\label{subsec:literature_optimal_shape_para}
Parameter selection is of great importance in kernel approximation, and has especially been a central topic for RBFs and their associated shape parameter.
This importance is testified by results appearing in classical monographs such as Section 
15.5 in~\cite{wendland2005scattered}, Chapters 15 and 17 in~\cite{fasshauer2007meshfree}, and Chapter 14 
in~\cite{fasshauer2015book}, the last one even including a recent overview of parameter optimization methods, also from a stochastic point of view.
Morever, Section 10.2 in~\cite{buhmann2003radial} identifies the search for the optimal shape parameter as an outstanding open problem.

A large body of literature has been devoted to develop heuristics and practical 
algorithms, with different ideas and goals.
Although early works on RBFs considered a fixed value of the shape parameter $\varepsilon$, such as with the multiquadric kernel in~\cite{hardy1971multiquadric}, 
it soon became clear that optimization of the shape parameter $\varepsilon$ was a crucial ingredient for the success RBF approximation.
The well known Rippa method~\cite{rippa1999algorithm} developed an efficient trick for the computation of Leave-One-Out Cross Validation (CV), which is widely 
used and has been extended to $k$-fold and stochastic CV~\cite{marchetti2021extension,ling2022stochastic}.
Systematic early studies investigating the effect of $\varepsilon$ and its optimization have been published 
in~\cite{fasshauer2007choosing,mongillo2011choosing}.
Further works have consider the use of global (stochastic) optimization 
methods~\cite{cavoretto2021global,mukhametzhanov2020experimental,cavoretto2024comparing}, and data driven 
estimations~\cite{ghalichi2022algorithm,veiga2026learning}.
The possible rigidity and strong dependence on the correct choice of the shape parameter has been relaxed by considering adaptive 
scaling~\cite{bozzini2002adaptive,bozzini2004scales}, more general Variably Scaled 
Kernels (VSKs)~\cite{bozzini2015interpolation,demarchi2020jumping,campi2021learning,demarchi2022persistence}, and even generalized scaling matrices
\cite{wenzel2024data,radhakrishnan2024mechanism}.
Many of these methods are concerned with the traditional accuracy-stability-complexity tradeoff, recently also using the effective condition 
number~\cite{chen2023selection,noorizadegan2024introducing,noorizadegan2022effective} as an indicator.
Beyond approximation theory, parameter optimization in kernel methods is central in the Gaussian Process community, see 
e.g.~\cite{karvonen2023maximum,karvonen2020maximum,teckentrup2020convergence}.

For a more comprehensive survey of existing computational approaches we refer also to the recent review~\cite{sun2026optimizing}, which particularly 
distinguishes three paradigms: Traditional methods (LOOCV, condition number estimation), optimization driven strategies (genetic algorithms, Bayesian 
optimization, simulated annealing), data-driven approaches (neural network based, variable shape parameter). Figure 2 in~\cite{sun2026optimizing} in particular
provides a detailed historical overview of several methods published from 1970 to the present day.

Theoretical results on RBF shape parameter optimization have been notably achieved in relation to the flat limit $\varepsilon\to 0$.
Several results have been proven in this case about the optimality of this regime for certain classes of functions and for both infinitely and 
finitely smooth kernels (see~\cite{driscoll2002interpolation,fornberg2004some,larsson2005theoretical,lee2007convergence,schaback2008limit,kindelan2016flat}
for smooth RBFs, \cite{karvonen2020worst} and~\cite{cheng2012multiquadric} specifically for the Gaussian and multiquadric kernel, 
\cite{lee2014flat,song2012multivariate} for finitely smooth kernels), also beyond interpolation (e.g. \cite{barthelme2021spectral} covers kernel matrix 
analysis and~\cite{barthelme2023gaussian} addresses Gaussian Processes in the flat limit).
Furthermore, from a functional perspective the change of shape parameter is known to change the corresponding RKHS for smooth 
kernels~\cite{zhang2013inclusion,steinwart2006explicit}, while it generally only changes the norm to an equivalent one for Sobolev kernels. This fact in 
particular is the first ingredient behind the analysis of multiscale methods~\cite{floater1996multistep,wendland2010multiscale,townsend2013multiscale}.
The kernel shape parameter(s) enter also the definition of the generalized Sobolev norms for Green 
kernels~\cite{fasshauer2011green,fasshauer2012green,fasshauer2013green}, thus offering a theoretically-motivated way to choose the value of $\varepsilon$.
Recently, the article~\cite{larsson2024scaling} introduced a comprehensive study of the effect of scaling on kernel interpolation, highlighting a number of 
phenomena that we are partially addressing also in this work.
A theoretical investigation of the interpolation error using Gaussian kernels will appear in \cite{larsson2026optimal}.

None of these works was able to theoretically explain the error behaviour based on the choice of the shape parameter. 
We are going to fill this gap by providing a solid theoretical analysis based on sharp direct and inverse statements.

\section{The optimal shape parameter and superconvergence}
\label{sec:optimal_shape_para}

In this section,
we first start by emphasizing the importance of asymptotic considerations over pre-asymptotic considerations (i.e.\ finite sets of interpolation points $X \subset \Omega$)
in order to obtain a meaningful problem.
Subsequently, 
we provide the main result \Cref{th:main_result} based on sharp direct and inverse statements,
and finally discuss the underlying assumptions and possible extensions.

Throughout the whole section, we focus on Sobolev kernels as defined in \Cref{ass:kernel_domain}.

\subsection{Non-asymptotic vs asymptotic regime}
\label{subsec:non_asymp_vs_asymp}

In this first subsection we motivate and introduce the notion of an asymptotically optimal shape parameter,
and more generally that of an asymptotically optimal kernel.

Consider input points $X = \{ x_i \}_{i=1}^n \subset \Omega$ and target values $Y = \{ y_i \}_{i=1}^n \subset \R$,
that are noiseless observations of an unknown function $f \in \mathcal{C}(\Omega)$, i.e. $f(x_i) = y_i$ for $i=1, ..., n$.
When having only access to the limited and finite data $X$ and $Y$, 
any choice $\varepsilon_0 > 0$ for the shape parameter of an RBF kernel could be optimal in terms of accuracy:
For this, imagine the unknown function $f$ is given by the kernel interpolant using the kernel $k_{\varepsilon_0}$, 
i.e.\ $f := s_{f, k_{\varepsilon_0}, X}$.
In this way, the choice $\varepsilon = \varepsilon_0$ provides an exact approximation (and is thus clearly optimal in terms of accuracy),
while other values $\varepsilon \neq \varepsilon_0$ usually do not provide an exact approximation,
as $s_{f, k_\varepsilon, X} \neq f = s_{f, k_{\varepsilon_0}, X}$.
This basic example illustrates that the search for an optimal shape parameter in terms of accuracy is ill-posed in the non-asymptotic setting,
i.e.\ if there are only finitely many input points and corresponding target values available.

Therefore we argue the need to consider the search for an optimal shape parameter in the asymptotic setting,
i.e.\ where one considers at least a sequence $(X_n)_{n \in \N} \subset \Omega$ of increasingly denser points with corresponding function values.
In this asymptotic regime, 
there are direct and inverse statements available:
Thus one obtains convergence rates for the approximation error of $f$ by $s_{f, k_\varepsilon, X}$, 
e.g.\ via bounds on $\Vert f - s_{f, k_\varepsilon, X} \Vert_{L_2(\Omega)}$, 
see \Cref{th:result_direct}.
On the contrary,
one can conclude smoothness properties of $f$ based on the convergence rate, 
see \Cref{th:result_inverse}.
This then yields the one-to-one correspondence formulated in \Cref{th:one_to_one}.

In the next subsection,
we will present how these direct and inverse statements enable a precise characterization of an asymptotically optimal shape parameter.
For this and as a preparation, keeping in mind \Cref{def:rate_of_approx}, 
we define the notion of a parameter $\varepsilon_0 \in \mathcal{P}$ being \textit{asymptotically optimal}.
Here, $\mathcal{P}$ denotes a suitable set of parameters, e.g.\ $\mathcal{P} = \R_+$ for the case of RBF kernels:

\begin{definition}
\label{def:optimal_shape}
We call $\varepsilon_0 \in \mathcal{P}$ an \textit{asymptotically optimal shape parameter},
if and only if the kernel $k_{\varepsilon_0}$ provides the fastest rate of approximation among all shape parameter $\varepsilon \in \mathcal{P}$.
\end{definition}

Furthermore, we define a kernel to be optimal if it provides the fastest possible rate of approximation.

In \Cref{subsec:preasymp_optimal_shape} we illustrate the difference between a non-asymptotic consideration and an asymptotic consideration another time:
We will highlight that there may be good shape parameters in the non-asymptotic regime, which are however not asymptotically optimal in the sense of \Cref{def:optimal_shape}.

\subsection{Sharp direct and inverse statements}
\label{subsec:sharp_direct_inverse_new}

In order to connect the sharp direct result of \Cref{th:result_direct} and the inverse as well as the saturation result of \Cref{th:result_inverse}
with the task of choosing an asymptotically optimal shape parameter, 
we note that changing the shape parameter $\varepsilon$ of a Sobolev translational invariant kernel usually does not change the corresponding RKHS as a set of functions
-- only the inner product is changed.
This follows immediately from the dilation property $\mathcal{F}[\Phi(\varepsilon ~ \cdot)] = \varepsilon^{-d} \mathcal{F}[\Phi(\cdot)](\frac{\omega}{\varepsilon})$ of the Fourier transform and the asymptotic behaviour of Eq.~\eqref{eq:fourier_decay}.
Thus the norm equivalence $\mathcal{H}_{k_\varepsilon}(\Omega) \asymp H^\tau(\Omega)$ is still valid.
We formulate this in a more general form in \Cref{ass:kernel_shape}, 
to which we will stick to in the following.
For this we make use of a general parameter set $\mathcal{P} \subset \R^p$ instead of a single real valued shape parameter:

\begin{assumption}
\label{ass:kernel_shape}
Let $\Omega \subset \R^d$ be a compact Lipschitz region.
Let $\mathcal{P} \subset \R^p$ for some $p \in \N$ be a set of parameters
such that $k_\varepsilon: \Omega \times \Omega \rightarrow \R$ is a parametrized Sobolev kernel,
i.e.\ it holds for all $\varepsilon \in \mathcal{P}$
\begin{align}
\label{eq:norm_equiv_varepsilon}
\mathcal{H}_{k_\varepsilon}(\Omega) \asymp H^\tau(\Omega).
\end{align}
\end{assumption}

\Cref{ass:kernel_shape} covers kernels frequently used in practice:
\begin{example}
\label{ex:ex1_transl_inv}
Let $k(x, z) = \Phi(x-z)$ be a translational invariant kernels satisfying Eq.~\eqref{eq:fourier_decay} and $\Omega \subset \R^d$ be a compact Lipschitz region.
Choose $\mathcal{P} = (0, \infty) \subset \R$.
Then $k_\varepsilon(x, z) := \Phi(\varepsilon(x-z))$ for $\varepsilon \in \mathcal{P}$ is a parametrized kernel that satisfies \Cref{ass:kernel_shape}.
\end{example}

\begin{example}
\label{ex:ex2_2L_kernels}
Let $k(x, z) = \Phi(x-z)$ be a translational invariant kernel satisfying Eq.~\eqref{eq:fourier_decay} and $\Omega \subset \R^d$ be a compact Lipschitz region.
Choose $\mathcal{P} = \{ A \in \R^{d \times d} ~|~ \mathrm{rank}(A) = d \} \subset \R$,
i.e.\ $d \times d$ matrices of full rank.
Then $k_A(x, z) := \Phi(A(x-z))$ for $A \in \mathcal{P}$ is a parametrized kernel that satisfies \Cref{ass:kernel_shape} \cite{wenzel2024data}.
\end{example}

\Cref{ex:ex1_transl_inv} covers in particular the case of RBF kernels,
while \Cref{ex:ex2_2L_kernels} covers the case of fully (non-degenerate) shape-adapted kernels,
see e.g.\ \cite{wenzel2024data} or \cite{radhakrishnan2024mechanism}. %
Further possible examples can be built based on VSKs,
which may even result in non-translational invariant kernels.

With these notions, we can show how the asymptotically optimal shape parameter $\varepsilon_0$ is determined by the underlying Mercer decomposition of the kernel $k_{\varepsilon_0}$.

Note that by \Cref{ass:kernel_shape}, 
for every $k_\varepsilon$ with $\varepsilon \in \mathcal{P}$ we are in the setting of \Cref{ass:kernel_domain},
so that the direct and inverse theorems of \Cref{subsec:sharp_direct_inverse} are applicable.
Note that a change of the shape parameter $\varepsilon$ changes the Mercer decomposition as introduced in \Cref{subsec:kernels_rkhs_power}.
Indeed, given a parametrized kernel $k_\varepsilon$ such that $\mathcal{H}_{k_\varepsilon}(\Omega) \asymp H^\tau(\Omega)$,
an application of Mercer theorem to the kernel integral operator $T_{k_\varepsilon}: L_2(\Omega) \rightarrow L_2(\Omega)$ defined as
\begin{align}
\label{eq:integral_op_varepsilon}
(T_{k_\varepsilon} f)(x) := \int_{\Omega} k_\varepsilon(x, z)f(z) ~ \mathrm{d}z	
\end{align}
yields eigenvalues $(\lambda_{j,\varepsilon})_{j \in \N}$ and eigenfunctions $(\varphi_{j, \varepsilon})_{j \in \N} \subset L_2(\Omega)$,
that may depend on $\varepsilon \in \mathcal{P}$.
This thus also changes the Mercer-based power spaces,
i.e.\ for the kernel $k_\varepsilon$ the power spaces are given by
\begin{align}
\label{eq:power_space_varepsilon}
(\mathcal{H}_{k_\varepsilon}(\Omega))_\vartheta := \left\{ f \in L_2(\Omega) ~:~ \sum_{j=1}^\infty \frac{|\langle f, \varphi_{j, \varepsilon} \rangle_{L_2(\Omega)}|^2}{\lambda_{j, \varepsilon}^\vartheta} < \infty \right\} \subset L_2(\Omega).
\end{align}
Note that Eq.~\eqref{eq:norm_equiv_varepsilon} implies that $\lambda_{j, \varepsilon} \asymp j^{-2\tau/d}$,
thus the $\varepsilon$-dependency of the summability within Eq.~\eqref{eq:power_space_varepsilon} essentially only depends on the asymptotic behaviour of the inner products $|\langle f, \varphi_{j, \varepsilon} \rangle_{L_2(\Omega)}|$.

Essentially,
this chain of thoughts can be summarized in the following theorem:

\begin{theorem}[Main result]
\label{th:main_result}
Under \Cref{ass:kernel_shape}, consider $0 \neq f \in \mathcal{C}(\Omega)$.
Let $\varepsilon \in \mathcal{P}$ and let $\beta _0 > 0$ be the rate of approximation  of $f$ by interpolation with $k_\varepsilon$, i.e., for any $\beta < \beta_0$ and all quasi-uniformly distributed points $X \subset \Omega$ it holds that
\begin{align}
\label{eq:main_result_conv_rate}
\Vert f - s_{f, k_\varepsilon, X} \Vert_{L_2(\Omega)} \leq c_{f, \beta} h_X^{\beta}.
\end{align}
Then $\beta_0 = \min(\vartheta_0 \tau, 2 \tau)$,
with $\vartheta_0$ being the largest value such that it holds
\begin{align}
\label{eq:main_result_largest_vartheta}
\sum_{j=1}^\infty j^{2\vartheta \tau/d} |\langle f, \varphi_{j, \varepsilon} \rangle_{L_2(\Omega)}|^2 < \infty
\end{align}
for all $\vartheta < \vartheta_0$.
Here $\varphi_{j, \varepsilon}$ are the Mercer eigenfunctions of the operator defined in Eq.~\eqref{eq:integral_op_varepsilon}
\end{theorem}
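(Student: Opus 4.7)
The idea is to recast the summability condition in \eqref{eq:main_result_largest_vartheta} as membership in a power space, and then to read off the claim from the one-to-one correspondence \Cref{th:one_to_one} together with the saturation statement \Cref{th:result_inverse}(iii). Under \Cref{ass:kernel_shape} one has $\mathcal{H}_{k_\varepsilon}(\Omega) \asymp H^\tau(\Omega)$, and by the standard Weyl-type asymptotics for integral operators whose RKHS is norm-equivalent to $H^\tau(\Omega)$ on a bounded Lipschitz region, the Mercer eigenvalues of $T_{k_\varepsilon}$ obey $\lambda_{j,\varepsilon} \asymp j^{-2\tau/d}$. Substituting into \eqref{eq:power_space_varepsilon} gives the equivalence
\[
f \in (\mathcal{H}_{k_\varepsilon}(\Omega))_\vartheta \iff \sum_{j=1}^\infty j^{2\vartheta\tau/d}\,|\langle f,\varphi_{j,\varepsilon}\rangle_{L_2(\Omega)}|^2 < \infty,
\]
so the value $\vartheta_0$ in \eqref{eq:main_result_largest_vartheta} is precisely the power space smoothness of $f$ with respect to $k_\varepsilon$ in the sense of \Cref{def:power_space_smoothness}.

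For the lower bound, I would fix any $\vartheta$ with $d/(2\tau) < \vartheta < \min(\vartheta_0,2)$. The reinterpretation above gives $f \in (\mathcal{H}_{k_\varepsilon}(\Omega))_\vartheta$, so \Cref{th:result_direct} produces an estimate $\Vert f - s_{f,k_\varepsilon,X}\Vert_{L_2(\Omega)} \leq c_f\, h_X^{\vartheta\tau}$ on every quasi-uniform point set $X \subset \Omega$. Taking the supremum over admissible $\vartheta$ then yields $\beta_0 \geq \min(\vartheta_0\tau,2\tau)$.

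For the matching upper bound I would argue by contradiction, assuming $\beta_0 > \min(\vartheta_0\tau, 2\tau)$. If $\beta_0 > 2\tau$, the saturation result \Cref{th:result_inverse}(iii) immediately forces $f=0$, contradicting $f\neq 0$. Otherwise $\vartheta_0 < 2$ and $\vartheta_0\tau < \beta_0 \leq 2\tau$; picking any $\vartheta^\ast \in (\vartheta_0,\beta_0/\tau)$, the rate of approximation in \Cref{def:rate_of_approx} is by construction attained on every quasi-uniform sequence, so either hypothesis of \Cref{th:result_inverse}(i) or (ii) is satisfied, depending on whether $\vartheta^\ast \leq 2-d/(2\tau)$ or not. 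Either way one concludes $f \in (\mathcal{H}_{k_\varepsilon}(\Omega))_{\vartheta'}$ for every $\vartheta' < \vartheta^\ast$, contradicting the maximality of $\vartheta_0$.

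The main obstacle I anticipate is bookkeeping rather than substance: one has to align the case split at $\vartheta = 2$ (where the one-to-one correspondence is superseded by saturation) with the case split at $\vartheta = 2-d/(2\tau)$ separating parts (i) and (ii) of \Cref{th:result_inverse}, and to verify that the definition of rate of approximation in \Cref{def:rate_of_approx} is strong enough to feed either branch. The Weyl-type eigenvalue asymptotics underlying the first step, though classical under \Cref{ass:kernel_shape}, should be stated with an explicit reference since they are the bridge between the abstract power space and the concrete summability condition \eqref{eq:main_result_largest_vartheta}.
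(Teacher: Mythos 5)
Your proposal is correct and follows essentially the same route as the paper: you identify the summability condition \eqref{eq:main_result_largest_vartheta} with power space membership via the eigenvalue asymptotics $\lambda_{j,\varepsilon}\asymp j^{-2\tau/d}$, obtain the lower bound $\beta_0\geq\min(\vartheta_0\tau,2\tau)$ from the direct statement \Cref{th:result_direct}, and rule out a larger rate via the inverse and saturation statements of \Cref{th:result_inverse}. Your version merely spells out the case split (saturation for $\beta_0>2\tau$, parts (i)/(ii) otherwise) that the paper's proof leaves implicit, which is a harmless refinement rather than a different argument.
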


We should assume $\vartheta > \frac{d}{2\tau}$,
or extend the direct statements to $\vartheta \in (0, \frac{d}{2\tau} \}$. 
I'll think about it.
$\rightarrow$ I checked \cite{narcowich2006sobolev},
but I did not see a quick way to extend the direct escaping statements to $(0, \frac{d}{2\tau})$.
Thus I specialized \Cref{subsec:no_optimal_in_escaping} to $\vartheta > \frac{d}{2\tau}$.

\begin{proof}
Due to \Cref{ass:kernel_shape}, any kernel $k_\varepsilon$ is a Sobolev kernel.
Thus we obtain for any $\varepsilon \in \mathcal{P}$ a Mercer expansion with ordered eigenvalues
$(\lambda_{j, \varepsilon})_{j \in \N}$ with asymptotics $\lambda_{j, \varepsilon} \asymp j^{-2\tau/d}$ and eigenfunctions $(\varphi	_{j, \varepsilon})_{j \in \N}$.
Since $f \in \mathcal{C}(\Omega) \hookrightarrow L_2(\Omega)$,
there exists a value $\vartheta_0 \geq 0$ such that Eq.~\eqref{eq:main_result_largest_vartheta} holds for all $\vartheta < \vartheta_0$.
By the direct result \Cref{th:result_direct} and \Cref{def:rate_of_approx},
this implies a rate of approximation of $\tau \min(\vartheta_0, 2)$.
Furthermore, the rate of approximation cannot be larger because of the inverse result \Cref{th:result_inverse}:
Any larger rate of approximation such that Eq.~\eqref{eq:main_result_conv_rate} holds,
implies that $f$ needs to be in a smoother power space,
contradicting the fact that $\vartheta_0$ was chosen as the largest value such that Eq.~\eqref{eq:main_result_largest_vartheta} holds.
\end{proof}

Note that the optimal shape parameter in the sense of \Cref{def:optimal_shape} is the one which provides the fastest asymptotic decay of the Mercer expansion coefficients $|\langle f, \varphi_{j, \varepsilon} \rangle_{L_2(\Omega)} |$.
This could be rephrased as choosing the shape parameter such that the spectral decay is fastest.

On the other hand,
as the power spaces $(\mathcal{H}_{k_\varepsilon})_\vartheta$ for $\vartheta > 1$ are conjectured to be characterized by the Sobolev smoothness $\vartheta \tau$ and additional boundary conditions \cite{karvonen2025general},
this also means that the shape parameter $\varepsilon$ should be chosen such that the resulting boundary conditions of $(\mathcal{H}_{k_\varepsilon})_\vartheta$ match the boundary conditions of the function $f$ of interest.

\subsection{Discussion of the main results}
\label{subsec:disussion}

The main result \Cref{th:main_result} is formulated for kernel interpolation of some function $f \in \mathcal{C}(\Omega)$ 
using a parametrized family of Sobolev kernels and quasi-uniformly distributed points.
We now discuss these assumptions and how the result can be extended beyond them.
Since \Cref{th:main_result} mainly relies on the combination of sharp direct and inverse statements,
we mainly need to comment on those:

Beyond kernel \emph{interpolation}:
Most of the direct, inverse and saturation statements actually hold beyond kernel interpolation,
thus the theoretical framework applies to these more general cases:
In fact, the inverse statements in \cite{wenzel2026sharp} for both the escaping as well as the superconvergence regimes are already formulated for general point-based approximants $s_{f, k, X} \in \Sp \{ k(\cdot, x_i), x_i \in X\}$,
thus covering also e.g.\ regularized approximation or Galerkin approximation.
Direct statements for the escaping regime are also available beyond interpolation \cite{wendland2005approximate},
only the direct statements in the superconvergence regime currently rely explicitly on orthogonal approximation (e.g.\ by interpolation) \cite{karvonen2025general}.
Like this, our theory is expected to be applicable also beyond kernel interpolation,
with possible applications to e.g.\ regularized approximation, statistical learning theory or kernel based methods for inverse problems.
We decided to state the main result focusing on kernel interpolation in order to keep the focus on the overall theoretical advancement instead of delving into too many technical details.

Beyond \emph{$L_2(\Omega)$ errors}:
Most of the direct statements are already available in general $\Vert \cdot \Vert_{L_p(\Omega)}$ norms for $1 \leq p \leq \infty$,
though it remains unclear whether these are sharp for $1 \leq p \leq 2$.
Unfortunately, there are only few inverse statements available using $p \neq 2$ \cite{schaback2002inverse,wenzel2026sharp},
and the available ones are not sharp.
One partial reason for this is the fact that the $\Vert \cdot \Vert_{L_p(\Omega)}$ norms for $p \neq 2$ are no longer Hilbert spaces norms,
thus making the inverse analysis more challenging.
Instead of Hilbert spaces,
one likely needs to consider Besov spaces \cite{wenzel2026sharp}.

Beyond \emph{quasi-uniform} point distributions: 
The assumption on quasi-uniform point distributions is mainly required for stability reasons:
Direct statements in the escaping regime require such stability properties \cite{narcowich2006sobolev},
and the inverse statements in both regimes also rely on such stability properties \cite{wenzel2025sharp,avesani2025sobolev,wenzel2026sharp}.
However, such stability properties can be obtained by other means than quasi-uniform point distributions,
e.g.\ by regularization \cite{wendland2005approximate}. %
Furthermore, also randomly distributed points are known to be (close to) optimal for approximation \cite{krieg2024random},
while not necessarily hurting stability arguments too severely \cite{buchholz2022kernel}.
We leave the examination of the resulting technical details and implications for future work.

Beyond \emph{finitely} smooth kernels: 
For infinitely smooth kernels like the Gaussian kernel, 
the situation is crucially different.
Here, the available stability estimates do not match the corresponding approximation error rates \cite{rieger2010sampling,diederichs2019improved},
such that the analyis for the inverse statements cannot easily be transfered.
Furthermore,
quasi-uniform points do not seem to be optimal \cite{rieger2014improved,wenzel2021novel}.
Nevertheless, superconvergence phenomenon are still available 
-- however in view of exponential convergence rates with undetermined constants,
it remains unclear whether a potential \emph{doubling} of the rate is really beneficial at all.
Finally, 
note that e.g.\ for the Gaussian kernel, 
the shape parameter $\varepsilon > 0$ actually steers the smoothness:
Flatter kernels give smaller RKHSs, i.e.\ smoother functions \cite[Corollary 7]{steinwart2006explicit}.
We consider the analyis for infinitely smooth kernels to be another challenging task,
probably requiring different theoretical tools.
Some steps in this direction have been done in \cite[Section 6]{larsson2024scaling}.

In summary,
\Cref{th:main_result} can likely be extended to a broader setting,
and this is confirmed in numerical experiments.
This will be adressed in follow-up works.

\section{Implications and Applications}
\label{sec:implications_applications}

In this section, we collect several immediate implications and applications of our main result \Cref{th:main_result}.
We are always working in the setting of \Cref{ass:kernel_shape}.

\subsection{No optimal kernel in the escaping regime}
\label{subsec:no_optimal_in_escaping}

As a first consequence of the main result \Cref{th:main_result}, 
we conclude that there is no asymptotically optimal shape parameter in the escaping regime.
Given $f \in H^{\vartheta \tau}(\Omega)$ for $\vartheta \in (0, 1)$, any kernel $k_\varepsilon$ satisfies $\mathcal{H}_{k_\varepsilon}(\Omega) \asymp H^\tau(\Omega)$ by \Cref{ass:kernel_shape},
and furthermore the power spaces $(\mathcal{H}_{k_\varepsilon}(\Omega))_\vartheta$ for $\vartheta \in [0, 1]$ are given by the Sobolev spaces $H^{\vartheta \tau}(\Omega)$ of intermediate smoothness.
Then the convergence rate is exactly characterized by the Sobolev smoothness, i.e.\ $\vartheta \tau$,
and thus independent of the choice of the shape parameter $\varepsilon \in \mathcal{P}$.
Thus, all shape parameters provide the same rate of approximation, 
and there is no shape parameter that provides a faster (or slower) convergence rate.

\begin{theorem}
Under \Cref{ass:kernel_shape}, consider $f \in \mathcal{C}(\Omega)$ of Sobolev smoothness $\vartheta \tau \in (\frac{d}{2}, \tau]$ (i.e.\ escaping regime),
i.e.\ it holds $f \in H^\sigma(\Omega)$ for any $\sigma < \vartheta \tau$ 
but $f \notin H^{\sigma'}(\Omega)$ for any $\sigma' > \vartheta \tau$.

Then the kernel interpolant $s_{f, k_\varepsilon, X}$ for any $\varepsilon \in \mathcal{P}$ yields the rate of approximation $\vartheta \tau$,
i.e.\ there is no asymptotically optimal shape parameter $\varepsilon \in \mathcal{P}$.
\end{theorem}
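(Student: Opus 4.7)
The plan is to deduce the claim directly from the main result (Theorem \ref{th:main_result}) together with the $\varepsilon$-independence of the power spaces in the escaping regime.

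First I would observe that, under Assumption \ref{ass:kernel_shape}, for every $\varepsilon \in \mathcal{P}$ the kernel $k_\varepsilon$ satisfies the hypotheses of Assumption \ref{ass:kernel_domain}, so the real interpolation identity \eqref{eq:power_spaces_escaping} applies uniformly in $\varepsilon$. This yields $(\mathcal{H}_{k_\varepsilon}(\Omega))_\vartheta \asymp H^{\vartheta \tau}(\Omega)$ for all $\vartheta \in [0,1]$ and all $\varepsilon \in \mathcal{P}$, with equivalence constants depending on $\varepsilon$ but the underlying space being the same set of functions. This is the crucial point: in the escaping regime the power space scale is intrinsic to the Sobolev scale and therefore does not depend on the chosen parametrization.

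Next I would translate the Sobolev smoothness hypothesis on $f$ into a power space smoothness statement in the sense of Definition \ref{def:power_space_smoothness}. Since $f \in H^\sigma(\Omega)$ for every $\sigma < \vartheta\tau$ but not for any $\sigma' > \vartheta\tau$, the above equivalence forces $f \in (\mathcal{H}_{k_\varepsilon}(\Omega))_{\vartheta'}$ for every $\vartheta' < \vartheta$ and rules out membership for any $\vartheta' > \vartheta$, so $f$ has power space smoothness exactly $\vartheta$ with respect to $k_\varepsilon$, and this value is the same for all $\varepsilon \in \mathcal{P}$. The hypothesis $\vartheta\tau \in (\tfrac{d}{2}, \tau]$ ensures $\vartheta \in (\tfrac{d}{2\tau}, 1]$, so we remain within the range where the one-to-one correspondence of Theorem \ref{th:one_to_one} applies.

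Finally I would invoke Theorem \ref{th:main_result} (equivalently, the combination of Theorem \ref{th:result_direct} and Theorem \ref{th:result_inverse}): since the power space smoothness of $f$ with respect to $k_\varepsilon$ equals $\vartheta \leq 1 < 2$, the rate of approximation is $\min(\vartheta\tau, 2\tau) = \vartheta\tau$, independent of $\varepsilon \in \mathcal{P}$. Hence no parameter in $\mathcal{P}$ produces a strictly faster rate than any other, which is precisely Definition \ref{def:optimal_shape} failing to single out an asymptotically optimal shape parameter. The only subtlety worth a sentence in the write-up is to emphasize that although the norms on $(\mathcal{H}_{k_\varepsilon}(\Omega))_\vartheta$ (and hence the constants $c_{f,\beta}$ in \eqref{eq:main_result_conv_rate}) genuinely depend on $\varepsilon$, the \emph{rate} does not — this is exactly what makes the notion of asymptotic optimality vacuous in the escaping regime.
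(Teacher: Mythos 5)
Your proposal is correct and follows essentially the same route as the paper's proof: both use the norm equivalence $(\mathcal{H}_{k_\varepsilon}(\Omega))_\vartheta \asymp H^{\vartheta\tau}(\Omega)$ from Eq.~\eqref{eq:power_spaces_escaping} (valid for all $\varepsilon \in \mathcal{P}$ under \Cref{ass:kernel_shape}) to conclude that the power space smoothness of $f$ is $\varepsilon$-independent, and then invoke the sharp direct and inverse statements (\Cref{th:result_direct}, \Cref{th:result_inverse}, equivalently \Cref{th:main_result}) to pin the rate at $\vartheta\tau$ for every $\varepsilon$. Your additional remarks on the $\varepsilon$-dependence of the equivalence constants versus the $\varepsilon$-independence of the rate are a useful clarification but do not change the argument.
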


\begin{proof}
By \Cref{ass:kernel_shape} and Eq.~\eqref{eq:power_spaces_escaping},
for any $\varepsilon \in \mathcal{P}$
it holds $(\mathcal{H}_{k_\varepsilon}(\Omega))_\vartheta \asymp H^{\vartheta\tau}(\Omega)$ for all $\vartheta \in [0, 1]$.
Due to the assumption of $f$ being in exactly this escaping regime of $\vartheta \in [0, 1]$,
the direct statement \Cref{th:result_direct} and inverse statement \Cref{th:result_inverse} imply a rate of approximation of $\vartheta \tau$.
\end{proof}

In fact,
the same argument works for any Sobolev kernel as defined in \Cref{ass:kernel_domain},
such that one can conclude not only that there is no optimal shape parameter,
but even that there is no asymptotically optimal kernel
in the escaping regime.

\subsection{Optimal kernels in the superconvergence regime} 
\label{subsec:existence_of_superconv_kernel}

As a second consequence,
we show that there always exists an optimal kernel in the superconvergence regime.

\begin{theorem}
\label{th:existence_optimal_kernel}
Under \Cref{ass:kernel_shape},
consider $f \in \mathcal{C}(\Omega)$ of power space smoothness $\vartheta \tau \in (\tau, 2\tau]$ (i.e.\ superconvergence regime),
i.e.\ it holds $f \in \calh_{\theta}(\Omega)$ for $\theta < \vartheta \tau$,
but $f \notin \calh_{\theta'}(\Omega)$ for any $\theta' > \vartheta \tau$.

Then there are kernels $k$ such that $\ns \asymp H^\tau(\Omega)$ and that provide a rate of approximation $\vartheta \tau$.
\end{theorem}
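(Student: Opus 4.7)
The proof should follow as a short corollary of Theorem~\ref{th:main_result}: the rate of approximation achieved by kernel interpolation with $k$ equals $\min(\vartheta_0, 2)\tau$, where $\vartheta_0$ is the power-space smoothness of $f$ with respect to $k$. Since $\vartheta \leq 2$ in the superconvergence regime, it suffices to construct a single Sobolev kernel $k$ with $\ns \asymp H^\tau(\Omega)$ and $f \in (\ns)_{\vartheta'}$ for every $\vartheta' < \vartheta$. Theorem~\ref{th:main_result} then immediately yields the desired rate, in fact the maximal rate $2\tau \geq \vartheta\tau$.

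\textbf{Construction.} The guiding idea is to engineer the Mercer decomposition of $k$ so that $f/\|f\|_{L_2(\Omega)}$ becomes an eigenfunction of the integral operator $T_k$, which forces all Mercer coefficients of $f$ past the first index to vanish. Fix a reference Sobolev kernel $k_0$ of smoothness $\tau$ with Mercer eigenpairs $(\mu_j, \phi_j)_{j \in \N}$ and set $V := \mathrm{span}(f) \subset L_2(\Omega)$. Because $\vartheta\tau > \tau$, the assumed regularity gives $f \in H^{\vartheta\tau}(\Omega) \subset H^\tau(\Omega) \asymp \mathcal{H}_{k_0}(\Omega)$, so $V$ is a one-dimensional subspace of the reference RKHS. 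Define the positive self-adjoint compact operator
\begin{equation*}
T_k := \lambda_1\, P_V + P_{V^\perp} T_{k_0} P_{V^\perp},
\end{equation*}
where $\lambda_1 > 0$ is arbitrary and $P_V, P_{V^\perp}$ are the $L_2$-orthogonal projections onto $V$ and $V^\perp$; let $k$ be the symmetric continuous Mercer kernel associated with $T_k$. By construction $f/\|f\|_{L_2(\Omega)}$ is the first eigenfunction of $T_k$, hence the Mercer coefficients satisfy $\langle f, \varphi_j\rangle_{L_2(\Omega)} = \|f\|_{L_2(\Omega)}\,\delta_{j,1}$, and the summability \eqref{eq:main_result_largest_vartheta} holds for every $\vartheta' > 0$.

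\textbf{Main obstacle.} The analytical heart of the argument is verifying $\ns \asymp H^\tau(\Omega)$. The eigenvalues of the compression $P_{V^\perp} T_{k_0} P_{V^\perp}$ on the codimension-one subspace $V^\perp$ interlace those of $T_{k_0}$ by Cauchy's inequalities, so they retain the decay $\lambda_j^{(k)} \asymp j^{-2\tau/d}$; this part is routine. Harder is the norm equivalence itself, because the expansion $\|g\|_{\ns}^2 = \sum_j (\lambda_j^{(k)})^{-1} |\langle g, \varphi_j\rangle_{L_2(\Omega)}|^2$ uses an eigenbasis differing from that of $T_{k_0}$, and one must check that replacing the first eigenfunction with $f/\|f\|_{L_2(\Omega)}$ only perturbs the Sobolev characterization by an equivalent quantity. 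Since $f \in H^\tau(\Omega)$, the projections $P_V$ and $P_{V^\perp}$ are bounded on $H^\tau(\Omega)$, so the $V^\perp$-contribution to the RKHS norm stays equivalent to the restriction of $\|\cdot\|_{H^\tau(\Omega)}$ to $V^\perp$, while the $V$-contribution is a single finite term; combining the two should yield the desired equivalence. A conceptually cleaner alternative, aligned with the boundary-condition perspective on power spaces discussed after Theorem~\ref{th:main_result} and in~\cite{karvonen2025general}, would be to take $k$ as the Green function of a self-adjoint elliptic operator of order $2\tau$ whose domain is tailored to contain $f$.
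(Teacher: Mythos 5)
Your route is genuinely different from the paper's, and as it stands it does not prove the stated theorem. The paper takes a kernel $k_0$ of the \emph{higher} smoothness, $\mathcal{H}_{k_0}(\Omega)\asymp H^{\vartheta\tau}(\Omega)$, and defines $k$ through the fractional power $T_k=T_{k_0}^{1/\vartheta}$; then $\ns=(\mathcal{H}_{k_0}(\Omega))_{1/\vartheta}\asymp H^\tau(\Omega)$ follows immediately from the interpolation identity Eq.~\eqref{eq:power_spaces_escaping}, the whole power scale of $k$ up to level $\vartheta$ coincides with the Sobolev scale, and the rate $\vartheta\tau$ drops out of the direct/inverse theorems. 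Your spectral surgery $T_k=\lambda_1 P_V+P_{V^\perp}T_{k_0}P_{V^\perp}$ is a different idea, and the step you yourself flag as the main obstacle, namely $\ns\asymp H^\tau(\Omega)$, is precisely the part that is not carried out: to even set up your two-sided comparison you must first identify the RKHS of the compression $P_{V^\perp}T_{k_0}P_{V^\perp}$ as $P_{V^\perp}(H^\tau(\Omega))$ with the quotient norm (e.g.\ via the range identity $\mathrm{ran}\,(AA^*)^{1/2}=\mathrm{ran}\,A$ with $A=P_{V^\perp}T_{k_0}^{1/2}$), and you must also verify continuity and strict positive definiteness of the resulting kernel so that the direct and inverse machinery applies; the eigenvalue interlacing remark does none of this work.

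More fundamentally, even a completed version of your construction would target the wrong conclusion. Making $f$ an eigenfunction forces $f\in T_kL_2(\Omega)=(\ns)_2$, so your kernel would give the rate $2\tau$; by \Cref{def:rate_of_approx} the rate of approximation is the exact supremum, so for $\vartheta<2$ such a kernel does not ``provide a rate of approximation $\vartheta\tau$''. Worse, this outcome is in direct tension with the framework you invoke: by \Cref{th:result_inverse}(ii) a rate $2\tau$ yields $f\in(\ns)_{\vartheta'}$ for all $\vartheta'<2$, and combined with Eq.~\eqref{eq:power_spaces_superconv} this would give $f\in H^{\theta'}(\Omega)$ for some $\theta'>\vartheta\tau$, contradicting the hypothesis on $f$. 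Hence either your kernel cannot satisfy $\ns\asymp H^\tau(\Omega)$ (the construction fails), or it does and then Eq.~\eqref{eq:power_spaces_superconv} cannot hold for it — in either case you cannot simply cite \Cref{th:main_result} via Eq.~\eqref{eq:main_result_largest_vartheta} and stop; the tension has to be resolved explicitly. The same objection hits your ``cleaner alternative'': the domain of a self-adjoint elliptic operator of order $2\tau$ with smooth coefficients lies locally in $H^{2\tau}$ in the interior by elliptic regularity, so it cannot be tailored to contain a function whose Sobolev smoothness is only $\vartheta\tau<2\tau$ unless the roughness is concentrated at the boundary, which the hypothesis does not guarantee. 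The paper's fractional-power construction achieves exactly the claimed rate $\vartheta\tau$ and avoids all of these issues.
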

\begin{proof}
Since $\vartheta\tau > \tau > d/2$ the space $H^{\vartheta \tau}(\Omega)$ is a RKHS. 
Let $k_0$ be any of its kernels, i.e.  $\calh_{k_0}(\Omega) \asymp 
H^{\vartheta\tau}(\Omega)$, and let $k_0(x, z) \coloneqq \textstyle \sum_{j=1}^\infty \lambda_j\varphi_j(x)\varphi_j(z)$ be its Mercer decomposition. 
In particular
\begin{equation}\label{eq:tmp_interp}
H^{\vartheta\tau}(\Omega)\asymp\calh_{k_0}(\Omega)
=\left\{ g \in L_2(\Omega) ~:~ \sum_{j=1}^\infty \frac{| \langle g, \varphi_j \rangle_{L_2(\Omega)} |^2}{\lambda_j} < \infty \right\}.
\end{equation}
We know that the corresponding power space $\left(\calh_{k_0}(\Omega)\right)_\sigma$ with $\sigma\in(0,1)$ is norm equivalent to the
Sobolev space $H^{\sigma\vartheta \tau}(\Omega)$, and for $\sigma\vartheta\tau>d/2$ it is a RKHS with kernel $k_{0,\sigma}(x, z) \coloneqq \sum_{j=1}^\infty
\lambda_j^\sigma\varphi_j(x)\varphi_j(z)$. 
In particular, for $\sigma\coloneqq 1/\vartheta$ we get $\left(\calh_{k_0}(\Omega)\right)_{1/\vartheta} \asymp 
H^\tau(\Omega)$ and the kernel 
\begin{align*}
k(x, z)\coloneqq k_{0,1/\vartheta}(x, z) \coloneqq \sum_{j=1}^\infty
\lambda_j^{\frac1{\vartheta}}\varphi_j(x)\varphi_j(z).
\end{align*}
Denoting as $\ns=\left(\calh_{k_0}(\Omega)\right)_{1/\vartheta}$ its RKHS, for $\sigma\in(1,2)$ we 
have the power space
\begin{equation*}
(\ns)_{\sigma}=\left\{ g \in L_2(\Omega) ~:~ \sum_{j=1}^\infty \frac{| \langle g, \varphi_j \rangle_{L_2(\Omega)}|^2}{\lambda_j^{\sigma/\vartheta}}< \infty\right\},
\end{equation*}
implying that $(\ns)_{\vartheta}=\calh_{k_0}(\Omega)\asymp H^{\vartheta\tau}(\Omega)$ by Eq.~\eqref{eq:tmp_interp}. 
In summary, $k$ is a Sobolev kernel of smoothness $\tau$ and $f\in H^{\vartheta \tau}(\Omega)=(\ns)_{\vartheta}$, so interpolation of $f$ by $k$ on quasi-uniform points gives an 
$L_2(\Omega)$ rate of approximation $\vartheta\tau$.
\end{proof}

We note that $k(x, z) := f(x)f(z)$ is an even better kernel for approximating $f$,
however this is not a Sobolev kernel as it is not strictly positive definite.
In \cite[Remark 28]{karvonen2025general} it was pointed out
that one has the optimal convergence rate vs stability ratio when working in superconvergence regime.
In this respect, choosing the ``correct'' shape parameter (or kernel) can be seen as an advantage in terms of stability:
There is no need to pick a smoother kernel -- just pick the correct shape such that the function of interest is in the superconvergence regime of that kernel, if available.

Despite there always exists an optimal Sobolev kernel in the sense of \Cref{th:existence_optimal_kernel},
this kernel may not be obtainable by tuning some shape parameters of a given kernel.
This will be highlighted in the subsequent subsection.

\subsection{No, unique, or multiple optimal shape parameter}
\label{subsec:no_unique_multiple}

In this subsection we highlight 
that there may not exist an optimal shape parameter,
or there might be a unique optimal shape parameter,
or there might even be multiple optimal ones.

These cases are presented through three one-dimensional kernels, which are the Green kernel of the PDE 
\begin{equation}\label{eq:1d_pde}
-u''(x) + \varepsilon^2 u(x) = f(x), \;\; x\in\Omega\coloneqq(0,1),
\end{equation}
each one corresponding to a different set of boundary conditions (BCs). Namely, we consider the following kernels:
\begin{itemize}
\item The $\cosh$ kernel 
\begin{equation*}
k^1(x, z) \coloneqq \frac{\cosh(\varepsilon \min(x, z)) \cosh(\varepsilon(1-\max(x,z))}{\varepsilon \sinh(\varepsilon)},
\end{equation*}
corresponding to the $\varepsilon$-independent Neumann BCs 
\begin{equation}\label{eq:cosh_bcs}
u'(0) = 0, u'(1) = 0.
\end{equation}
\item The periodic kernel
\begin{align*}
k^2(x,z) &\coloneqq
\frac{
\Bigl(\cosh(\varepsilon m) + \frac{\alpha(\varepsilon)}{\varepsilon} \sinh(\varepsilon m)\Bigr)
\Bigl(\cosh(\varepsilon (1-M)) + \frac{\alpha(\varepsilon)}{\varepsilon} \sinh(\varepsilon (1-M))\Bigr)
}{
2\,\alpha(\varepsilon)\,\cosh(\varepsilon) + \Bigl(\varepsilon + \frac{\alpha(\varepsilon)^2}{\varepsilon}\Bigr)\sinh(\varepsilon)
}\\
\alpha(\varepsilon) &= 1+\cos(\varepsilon), \;\;m = \min(x,z), \;\; M = \max(x,z),
\end{align*}
corresponding to
\begin{align}\label{eq:periodic_bcs}
u'(0) - (1 + \cos(\varepsilon)) u(0) &=0\\
u'(1) + (1 + \cos(\varepsilon)) u(1) &=0.\nonumber
\end{align}
Observe that $k^2$ is not periodic in $\varepsilon$, but the boundary conditions are.
\item The basic Mat\'ern kernel $k^3(x,z)\coloneqq \frac{1}{2\varepsilon}e^{-\varepsilon|x-z|}$, with Robin BCs
\begin{align}\label{eq:matern_bc}
u'(0) - \varepsilon u(0) &=0\\
u'(1) + \varepsilon u(1) &=0.\nonumber
\end{align}
\end{itemize}
The kernels $k^1$ and $k^3$ are known to be Green kernels of Eq.~\eqref{eq:1d_pde} with the corresponding BCs (see e.g.~\cite{fasshauer2015book}), while we
obtained $k^2$ by explicit computation by prescribing Eq.~\eqref{eq:periodic_bcs}.
Given that these kernels are Green kernels of Eq.~\eqref{eq:1d_pde}, integration by parts shows that they all lead to a RKHS which is norm-equivalent to 
$H^1(0,1)$. 
The three kernels are visualized in Figure~\ref{fig:ex1_viz_kernels}:
Although they all have two similar exponential branches (due to the 
fundamental solutions of the PDE), for small $\varepsilon$ (left plot) they differ on their behavior close to the boundary because of the different boundary
conditions. When $\varepsilon$ becomes instead sufficiently large (right plot) they localize and become essentially identical, as they all converge to the Mat\'ern
kernel, which is the full-space Green kernel of the PDE.

\begin{figure}
\centering
\begin{tabular}{cc}
\includegraphics[width=.45\textwidth]{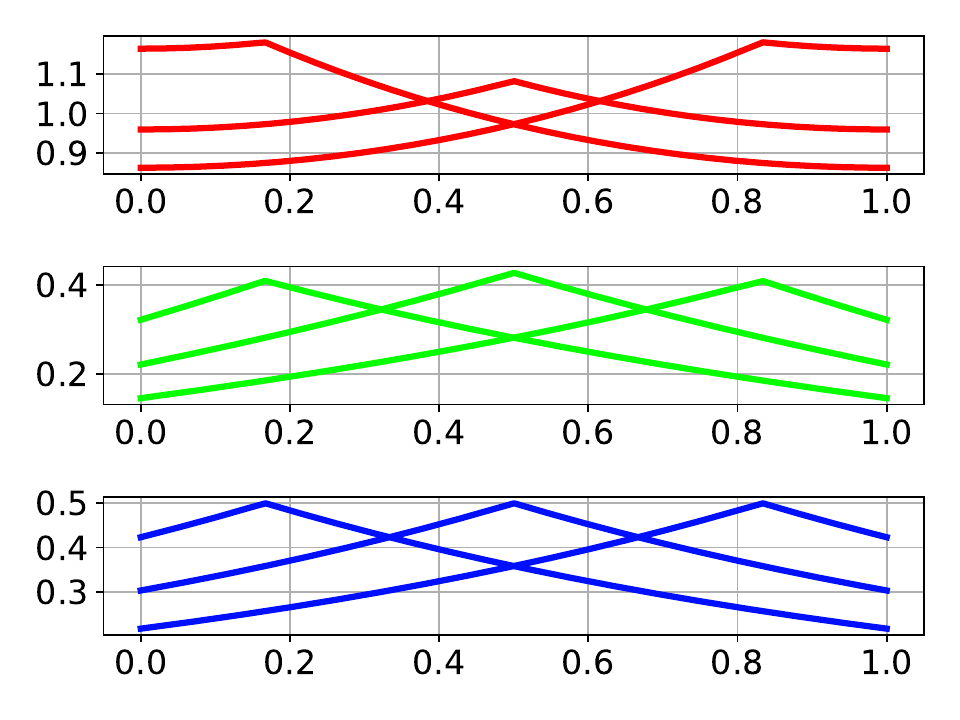}&
\includegraphics[width=.45\textwidth]{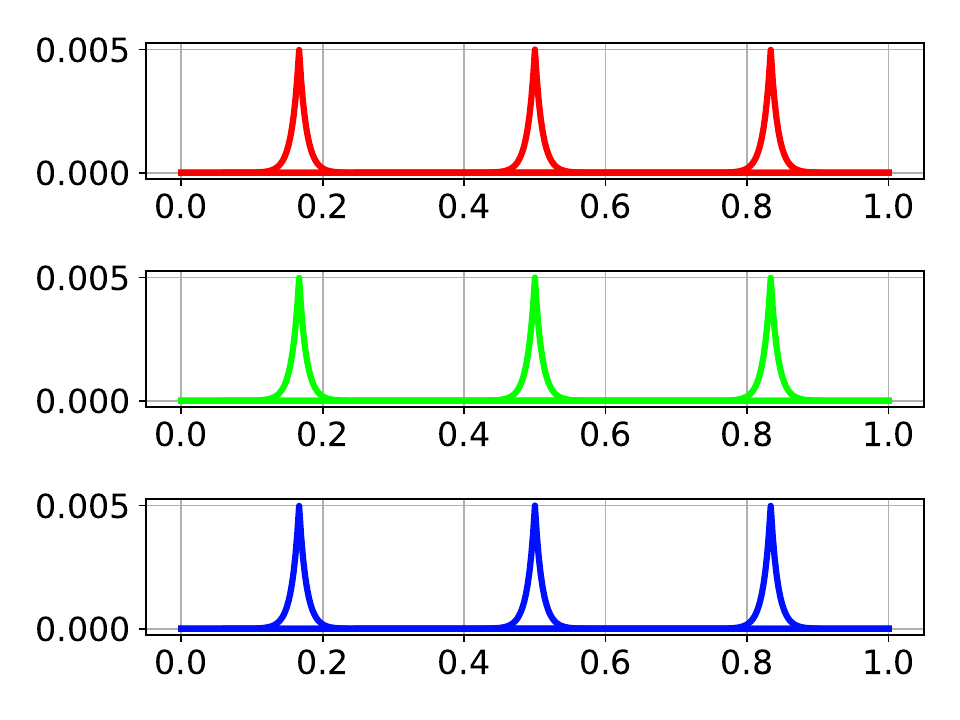}\\
\end{tabular}
\caption{Visualization of the three kernels $k^1(\cdot, x),k^2(\cdot, x), k^3(\cdot, x)$ (from top to bottom), centered at $x=1/6,1/2,5/6$, and
scaled with $\varepsilon=1$ (left) and $\varepsilon=100$ (right). 
}\label{fig:ex1_viz_kernels}
\end{figure}

Using these kernels we can prove the following result.
\begin{lemma}\label{lemma:1d}
Let $\Omega\coloneqq (0,1)$ and $k^1,k^2,k^3$ as defined above.

Let $f\in H^1(\Omega)$ and assume there exists $\bar\varepsilon>0$ and $c_f>0$ such that for any set of quasi-uniform points $X\subset\Omega$ it holds
\begin{equation}\label{eq:1d_fast_rate}
\norm{L_2(\Omega)}{f-s_{f, k_{\bar\varepsilon},X}}\leq c_f h_{X}^2,
\end{equation}
where $s_{f, k_{\bar \varepsilon}, X_n}$ is the interpolant of $f$ on $X_n$ computed with the kernel $k\in\{k^1,k^2,k^3\}$ with parameter $\bar\varepsilon$. Then we have
the following:
\begin{itemize}
 \item If $k=k^1$, then Eq.~\eqref{eq:1d_fast_rate} holds for all $\varepsilon>0$.
 \item If $k=k^2$, then Eq.~\eqref{eq:1d_fast_rate} holds for all $\varepsilon=\bar\varepsilon+2 m \pi$, $m\in\Z$.
 \item If $k=k^3$, then Eq.~\eqref{eq:1d_fast_rate} holds only for $\varepsilon=\bar\varepsilon$.
\end{itemize}
\end{lemma}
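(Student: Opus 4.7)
The plan is to translate the hypothesis into a power-space membership via \Cref{th:main_result}, identify that power space as the range of the integral operator through the Green kernel structure, and then compare boundary conditions across the three kernels.

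In the first step I would observe that each $k^i_\varepsilon$ satisfies $\mathcal{H}_{k^i_\varepsilon}(\Omega) \asymp H^1(\Omega)$, so \Cref{ass:kernel_shape} applies with $\tau=1$. Then \Cref{th:main_result} turns the hypothesis \eqref{eq:1d_fast_rate} at $\bar\varepsilon$ into the statement that the smoothness parameter $\vartheta_0$ of $f$ relative to $k_{\bar\varepsilon}$ reaches the saturation value $2$, equivalently $f \in T_{k_{\bar\varepsilon}} L_2(\Omega)$. Conversely, establishing $f \in T_{k_\varepsilon} L_2(\Omega)$ at any other admissible $\varepsilon$ would yield the rate $h_X^2$ through the direct statement \Cref{th:result_direct}.

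In the second step I would identify $T_{k_\varepsilon} L_2(\Omega)$ via the Green kernel structure. Because each $k_\varepsilon$ is by construction the Green function of the regular Sturm--Liouville problem $L_\varepsilon u = -u''+\varepsilon^2u$ with boundary operator $B_\varepsilon$ prescribed by \eqref{eq:cosh_bcs}, \eqref{eq:periodic_bcs}, \eqref{eq:matern_bc} respectively, a double integration by parts together with unique solvability of the BVP gives
\begin{equation*}
T_{k_\varepsilon} L_2(\Omega) \;=\; \{\,u\in H^2(\Omega)\,:\,B_\varepsilon u = 0\,\}.
\end{equation*}
Combined with the first step, this reduces the lemma to a direct comparison of $B_\varepsilon$ with $B_{\bar\varepsilon}$, for each of the three kernels.

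In the third step the comparison is straightforward. For $k^1$ the Neumann conditions $f'(0)=f'(1)=0$ are $\varepsilon$-independent, so $B_{\bar\varepsilon}f=0$ forces $B_\varepsilon f=0$ for every $\varepsilon>0$. For $k^2$ the BCs \eqref{eq:periodic_bcs} depend on $\varepsilon$ only through $\cos(\varepsilon)$, hence $B_\varepsilon = B_{\bar\varepsilon}$ precisely when $\varepsilon-\bar\varepsilon\in 2\pi\Z$. For $k^3$, the Robin coefficients $\pm\varepsilon$ are linear in $\varepsilon$, and the system $B_{\bar\varepsilon}f=0=B_\varepsilon f$ with $\varepsilon\neq\bar\varepsilon$ forces $f(0)=f(1)=f'(0)=f'(1)=0$, a degenerate situation implicitly excluded by the statement. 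The main obstacle I expect is Step~2 for $k^2$: the BCs \eqref{eq:periodic_bcs} have been engineered to fit the closed-form $k^2$ rather than inherited from a well-known PDE, so one would have to verify by explicit differentiation of $T_{k^2_\varepsilon}g$ under the integral sign, tracking the jump of $\partial_x k^2(\cdot,z)$ at the diagonal $x=z$, that the resulting $u$ indeed satisfies $L_\varepsilon u = g$ together with \eqref{eq:periodic_bcs}. For $k^1$ and $k^3$ this identification is the classical Sturm--Liouville Green function computation \cite{fasshauer2015book}.
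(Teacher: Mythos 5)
Your proposal follows essentially the same route as the paper's proof: use the norm-equivalence and the main theorem to place $f$ in $T_{k_{\bar\varepsilon}}(L_2(\Omega))$, identify this range with the solution set of the boundary value problem via the Green-kernel structure, compare the boundary conditions across $\varepsilon$ for the three kernels, and then recover the rate $h_X^2$ at the admissible parameters from a direct (superconvergence) statement. The only cosmetic difference is that the paper cites Theorem 5.1 of \cite{schaback1999improved} for the final sufficiency step where you invoke \Cref{th:result_direct}, which amounts to the same thing.
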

\begin{proof}
As all kernels give $\ns\asymp H^1(\Omega)$, applying Theorem~\ref{th:main_result} gives $f\in  
\calh_{k_{\bar\varepsilon},2}(\Omega)=T_{k_{\bar\varepsilon}}(L_2(\Omega))$ (see Eq.~\eqref{eq:power_space_varepsilon} and Eq.~\eqref{eq:integral_op_varepsilon} for 
the definition of these spaces).

Now each kernel is a Green kernel and thus, as observed in Section 4.3 in ~\cite{karvonen2025general}, the operator $T_{k_\varepsilon}$ 
 is the solution operator of the PDE Eq.~\eqref{eq:1d_pde}, each time with a different set of BCs. It follows that $T_{k_{\bar\varepsilon}}(L_2(\Omega))$ is the set of
solutions of this PDE. Then, under the assumptions of the lemma, $f$ must satisfy these boundary conditions for $\varepsilon=\bar\varepsilon$.

Looking at the precise definition of these BCs, this implies that $f$ must satisfy these BCs for all $\varepsilon>0$ for $k^1$ (see Eq.~\eqref{eq:cosh_bcs}),
for all $\varepsilon=\bar\varepsilon+2m\pi$, $m\in\Z$ for $k^2$ (see Eq.~\eqref{eq:periodic_bcs}), and only for $\varepsilon=\bar\varepsilon$ for $k^3$
(by Eq.~\eqref{eq:matern_bc}).
Finally, since for these values we have $f\in T_{k_\varepsilon}(L_2(\Omega))$ then Theorem 5.1 in~\cite{schaback1999improved} implies 
that Eq.~\eqref{eq:1d_fast_rate} holds.
\end{proof}

We discuss now two examples presenting different instances of the behaviors described in~\Cref{lemma:1d}. Both of them report interpolation errors obtained for 
a specific function and with the three kernels, with
$101$ logarithmically equally spaced values $\varepsilon$ in $[10^{-3}, 10^2]$, and on sets of increasingly large equally spaced points $X\coloneqq\{(2i-1)/2n: 1\leq i\leq n\}$ with $n \in \{100, 196, 387, 762, 1500\}$.
We choose to exclude the boundary points $\{0,1\}$ to avoid interpolation of the Dirichlet part of the boundary conditions.

\begin{example}\label{ex:no_dip}
Let $f(x)\coloneqq 1 + x ^\frac32 - \frac{3}{4} x ^ 2 \in H^1(\Omega)$, which satisfies the BCs Eq.~\eqref{eq:cosh_bcs} of $k^1$ for all $\varepsilon>0$, the
BCs Eq.~\eqref{eq:periodic_bcs} of $k^2$ for $\varepsilon = \pm \pi + 2 m \pi$, $m\in\N$, and does not satisfy the BCs Eq.~\eqref{eq:matern_bc} of $k^3$ for any
$\varepsilon>0$, except for the limit $\varepsilon\to 0$.

Interpolation results as functions of $\varepsilon$ are shown in Figure~\ref{fig:ex1_1d}: 
The left column reports the $L_2(\Omega)$ error (obtained via the Root Mean Squared Error (RMSE) computed on
$10^4$ 
equally spaced points), 
for the given values of $|X|$. For the same values of $|X|$, the right column shows the estimated rates of convergence. 
\begin{figure}[h!]
\centering
\begin{tabular}{cc}
\includegraphics[width=.5\textwidth]{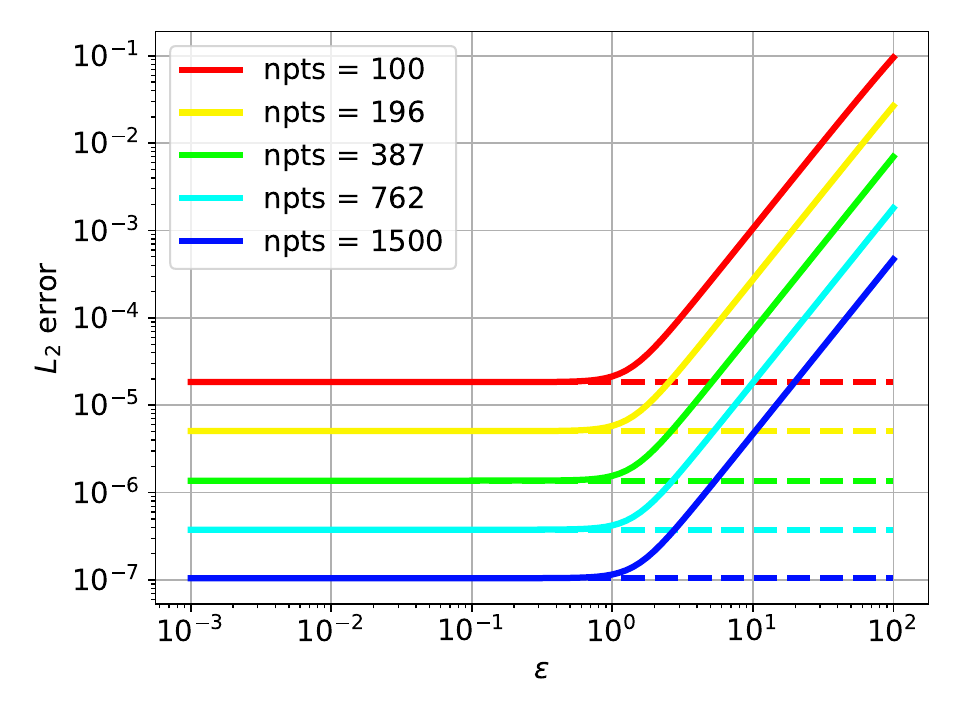}&
\includegraphics[width=.5\textwidth]{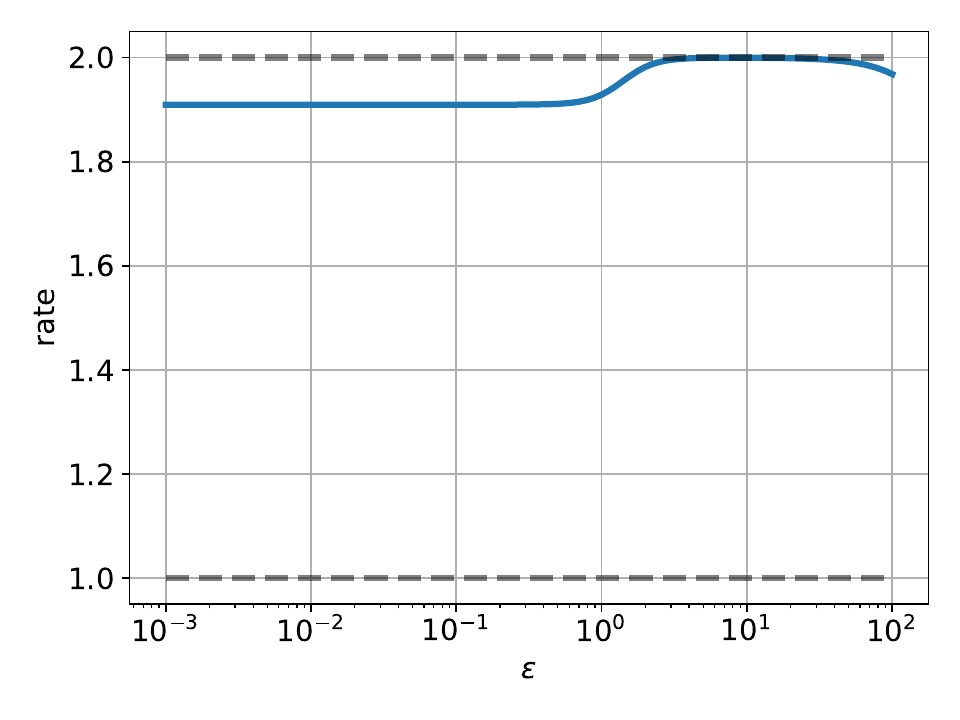}\\
\includegraphics[width=.5\textwidth]{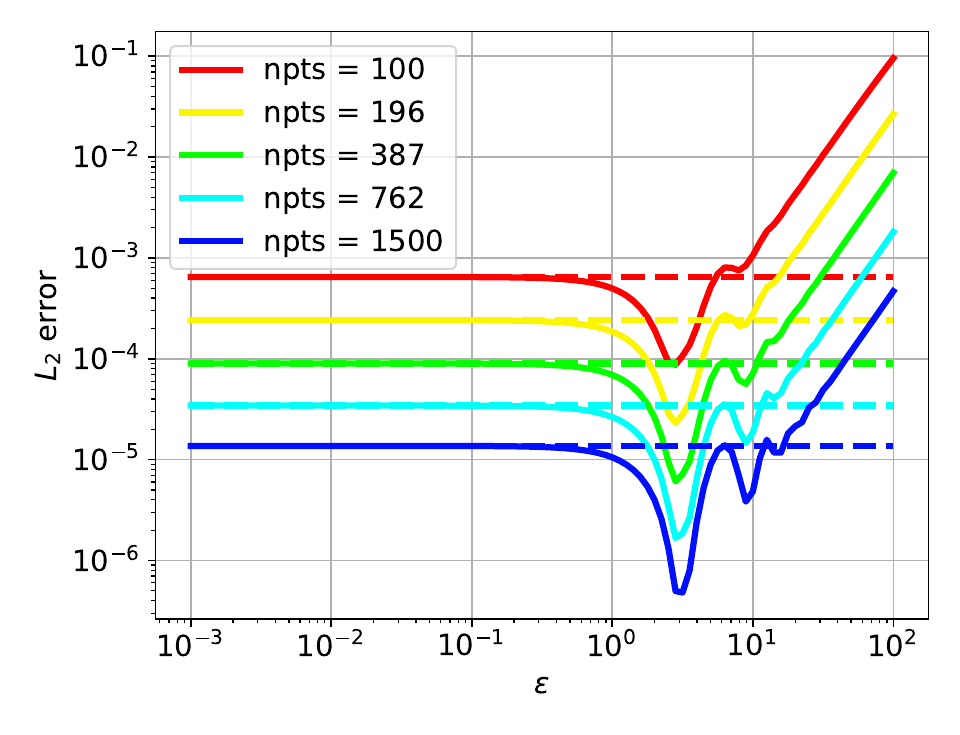}&
\includegraphics[width=.5\textwidth]{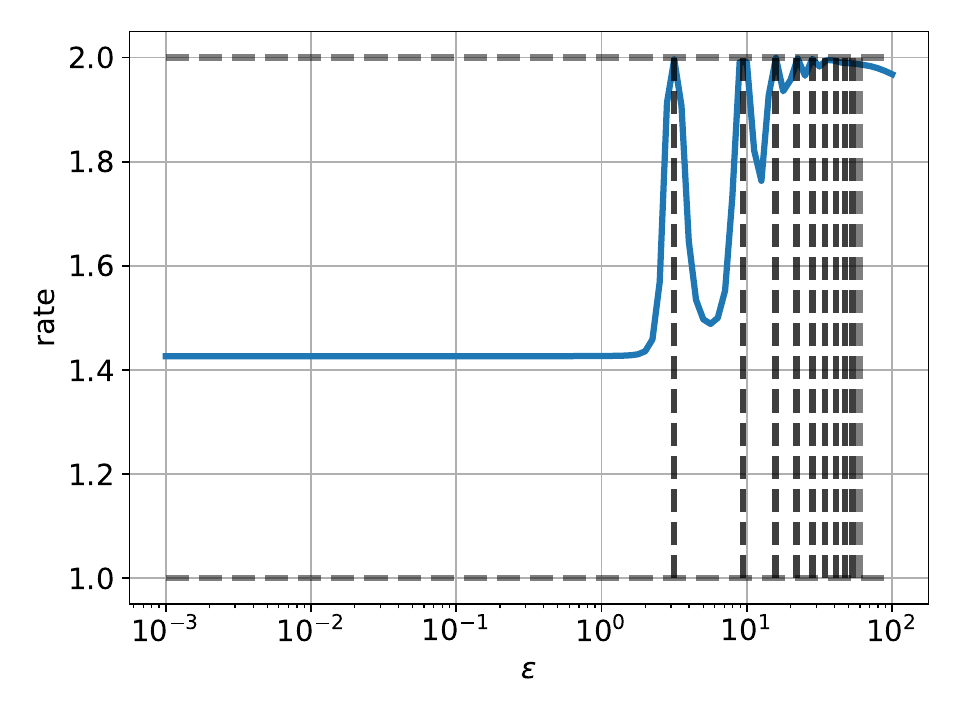}\\
\includegraphics[width=.5\textwidth]{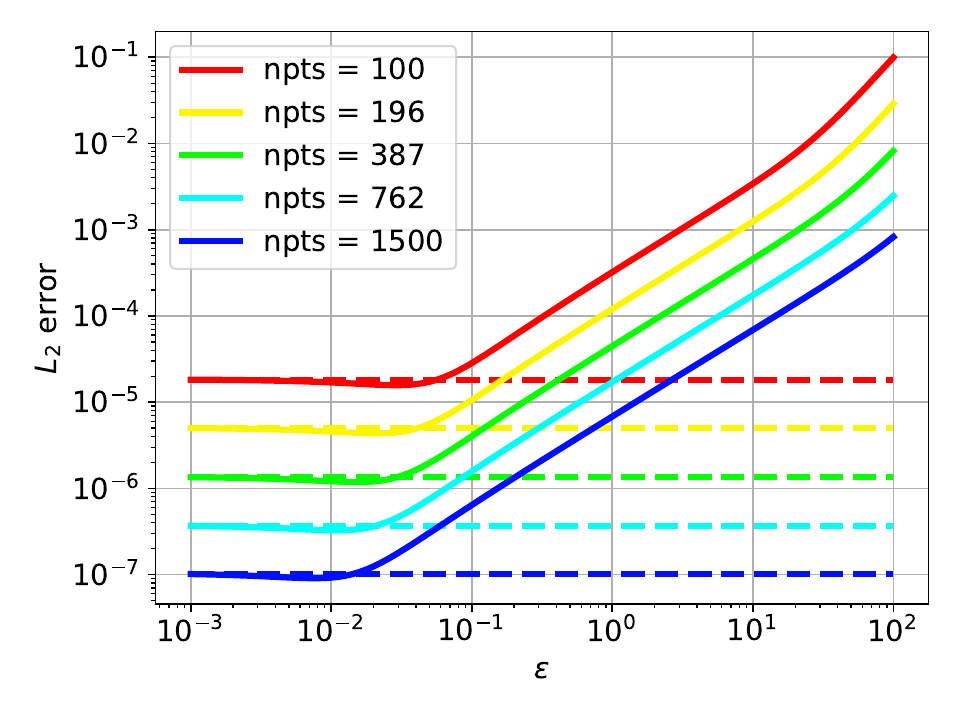}&
\includegraphics[width=.5\textwidth]{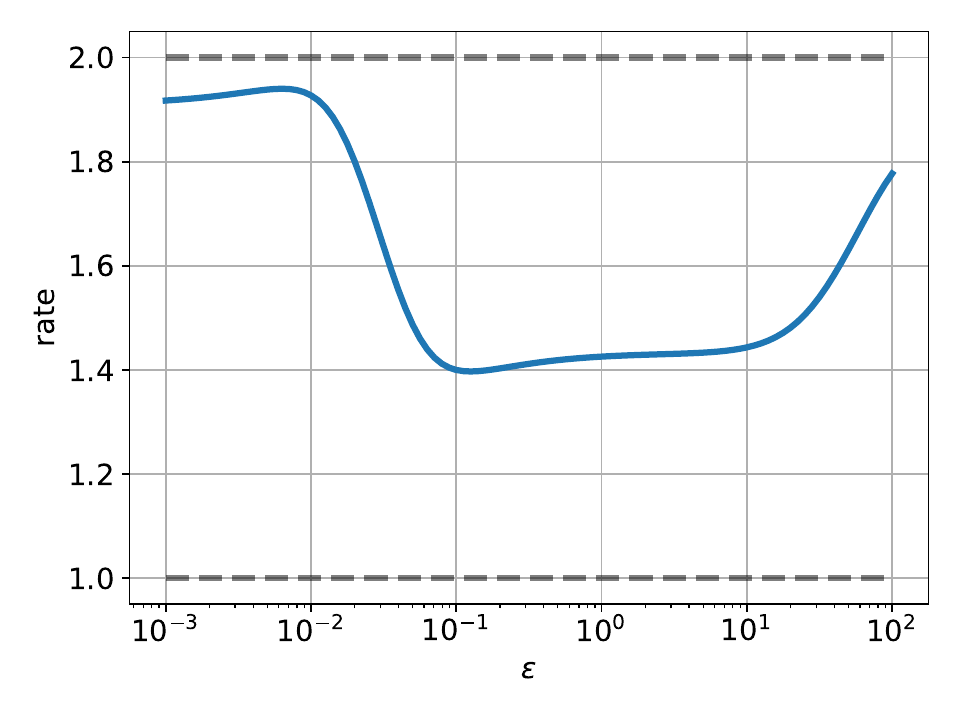}\\
\end{tabular}
\caption{Example~\ref{ex:no_dip}: Error decay (first column) and rate of decay (second column) as a function of the shape parameter $\varepsilon$ for increasing number of points $|X|$, and for the 
three kernels. 
The first column additionally reports the interpolation error in the limit $\varepsilon\to0$ extended to all 
$\varepsilon$ (dotted lines).
The second column shows horizontal lines for the basic rate in $\ns$ and the full superconvergence rate, and vertical lines for the expected optimal shape 
parameter(s), when existing.}\label{fig:ex1_1d}
\end{figure}
First, observe that our theory is mainly addressing the asymptotic behavior, thus the right column in the plots. Here we can clearly see that any value of 
$\varepsilon$ gives (almost) the optimal rate $2$ for $k^1$, which is achieved only for the predicted values of $\varepsilon$ for $k^2$ (vertical dotted bars),
and only in the limit $\varepsilon\to0$ for $k^3$. Observe moreover that for large enough $\varepsilon$ the rate is increasing, also for $k^3$, possibly
because of the localization happening in this regime (see Figure~\ref{fig:ex1_viz_kernels}). For intermediate values of $\varepsilon$ and for $k^2$ and $k^3$,
the rate stabilizes at around $1.5$, which is $1/2$ more than the basic rate $1$, and which is the conjectured rate for smooth functions which do not satisfy 
any boundary condition (see~\cite{karvonen2025general}).

Looking at the errors for fixed $|X|$ (left column), we immediately see two limiting behaviors: small values $\varepsilon$ lead to a plateau, which is 
highlighted by dashed horizontal lines and which we expect to be connected to the flat limit~\cite{song2012multivariate}. For large $\varepsilon$ instead we 
see that all kernels give larger errors, again possibly because of the high localization. For intermediate values, only $k^2$ admits optimal parameters, as
expected.
In particular, no rate better than the flat limit is observed for $k^1, k^3$, suggesting that one should use the limiting polyharmonic interpolant when
available.

\end{example}

\begin{example}\label{ex:dip}
In this case we consider $f(x)\coloneqq  x ^\frac32 + x ^ 2(-1 + \frac1{2 \bar \varepsilon + 4}) \in H^1(\Omega)$ with $\bar \varepsilon\coloneqq 1$. It does 
not satisfy the BCs of $k^1$, it satisfies those of
$k^3$ for $\varepsilon=\bar\varepsilon$, and those of $k^2$ for $\varepsilon = \pm \frac{\pi}2 + 2 m \pi$, $m\in\N$. In the same setting as
Example~\ref{ex:no_dip}, the $L_2(\Omega)$ errors and corresponding rates are shown in Figure~\ref{fig:ex2_1d}.
\begin{figure}[h!]
\centering
\begin{tabular}{cc}
\includegraphics[width=.5\textwidth]{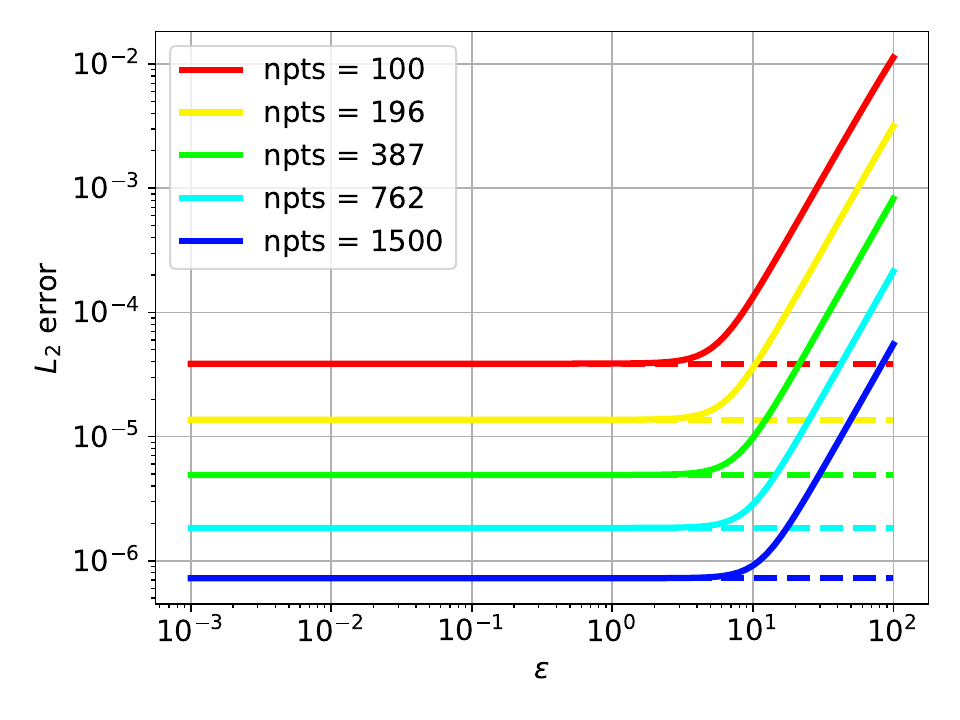}&
\includegraphics[width=.5\textwidth]{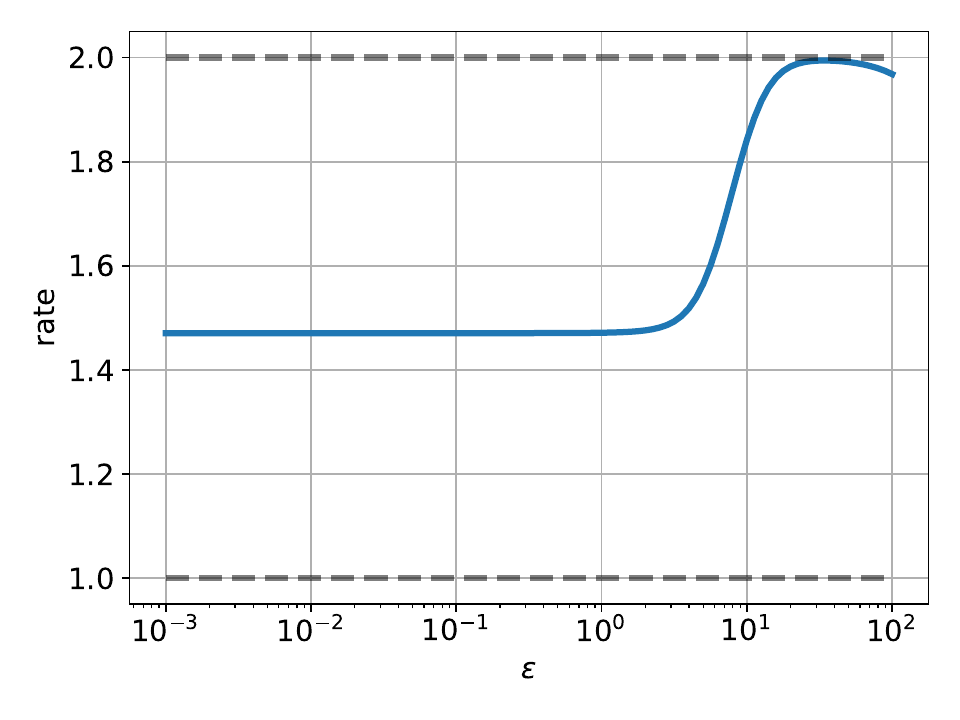}\\
\includegraphics[width=.5\textwidth]{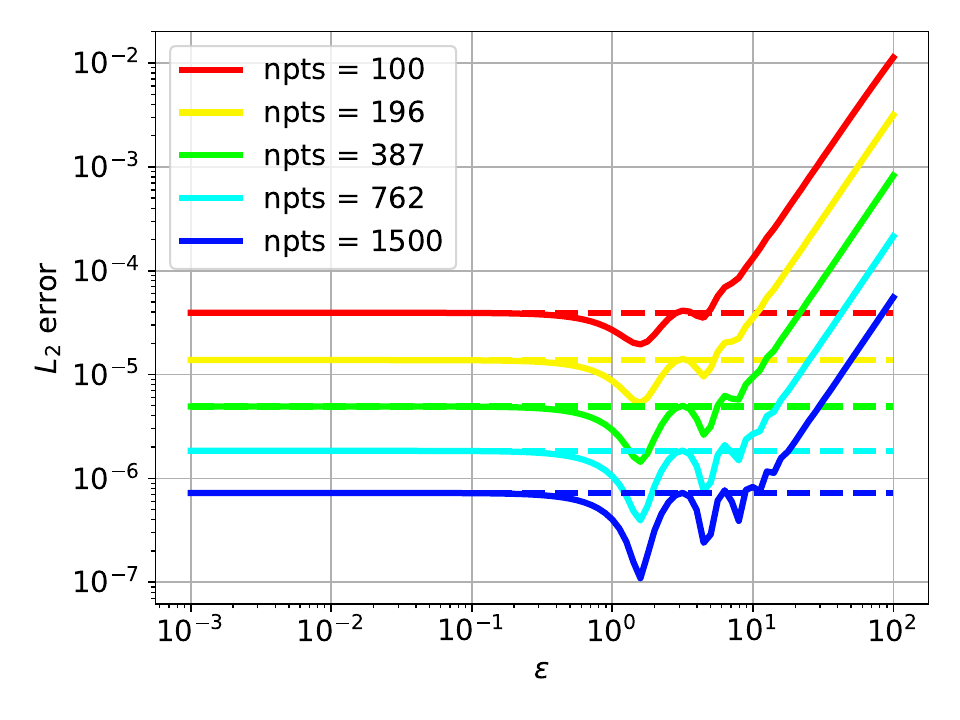}&
\includegraphics[width=.5\textwidth]{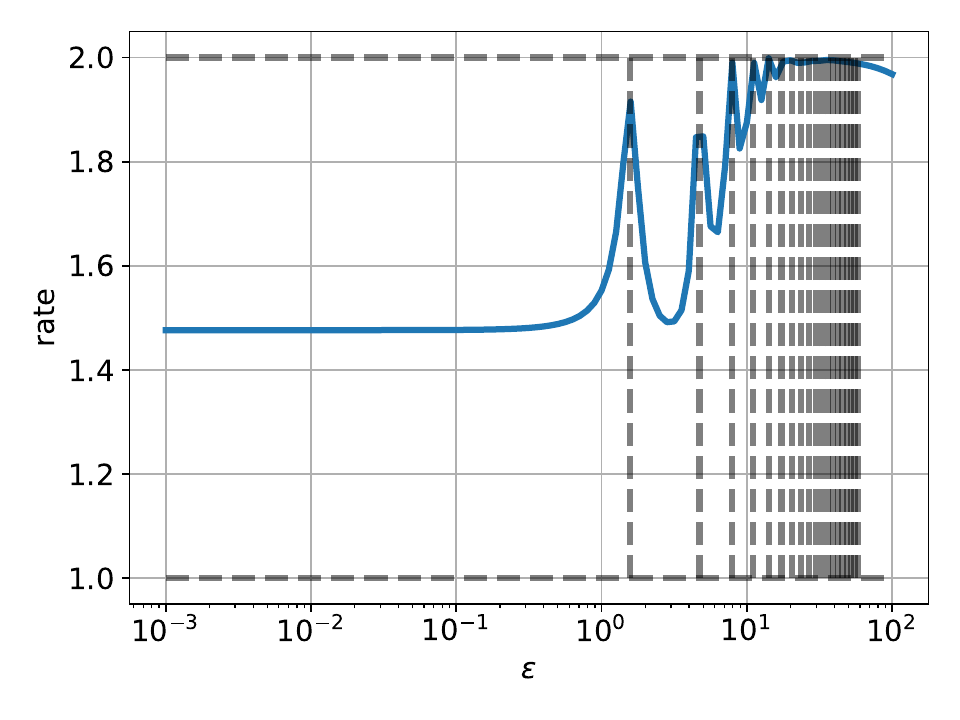}\\
\includegraphics[width=.5\textwidth]{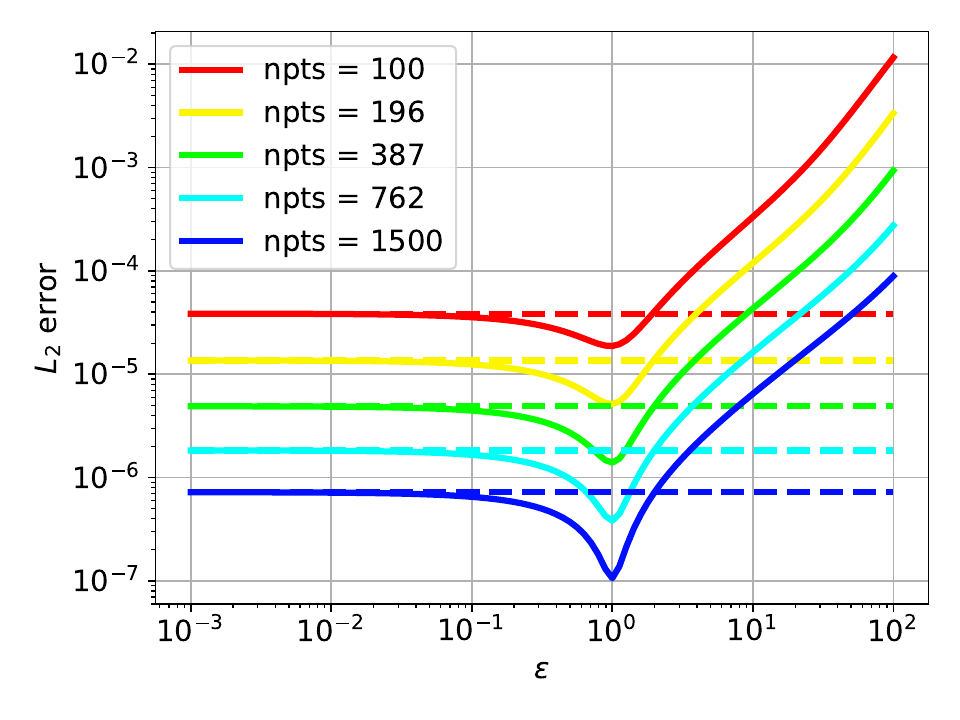}&
\includegraphics[width=.5\textwidth]{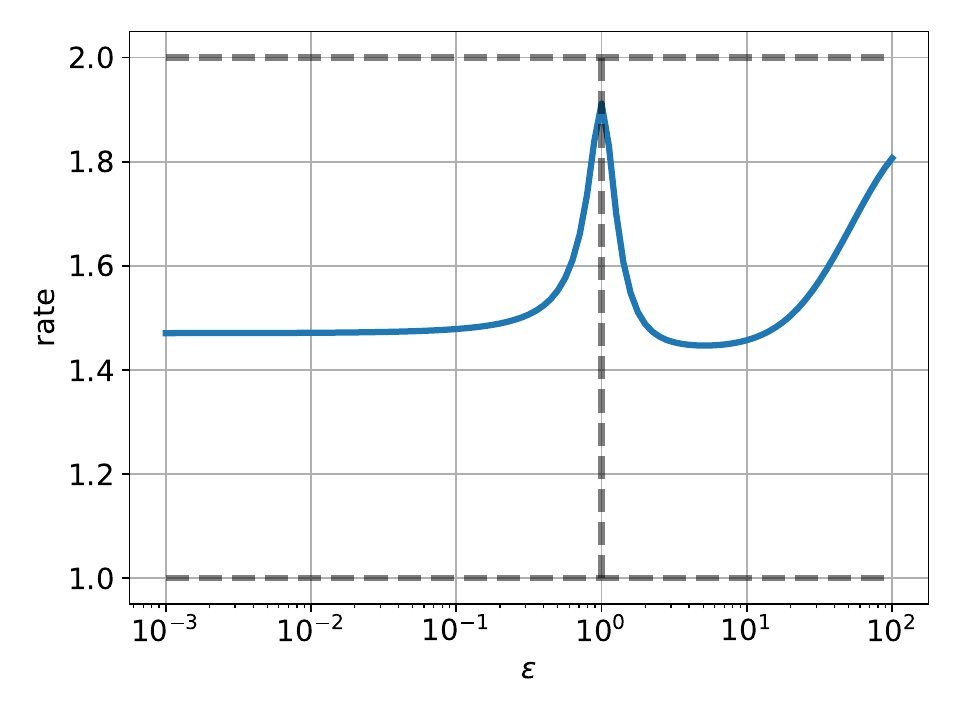}\\
\end{tabular}
\caption{Example~\ref{ex:dip}: Error decay (first column) and rate of decay (second column) as a function of $\varepsilon$ for increasing $n$, and for the 
three kernels. 
The first column additionally reports the interpolation error in the limit $\varepsilon\to0$ extended to all 
$\varepsilon$ (dotted lines).
The second column shows horizontal lines for the basic rate in $\ns$ and the full superconvergence rate, and vertical lines for the expected optimal shape 
parameter(s), when existing.}\label{fig:ex2_1d}
\end{figure}
In this case the error rates (right column) show the single and multiple predicted optimal values for $k^2, k^3$ (vertical dashed lines), while (almost)
optimal rates are obtained for large $\varepsilon$ for all kernels, due to the localization of the kernels and the essentially vanishing boundary conditions.
Note that \Cref{fig:intro_placeholder} displays the red line of the middle left plot of \Cref{fig:ex2_1d},
and additionaly further error curves for more points.
\end{example}

\subsection{Preasymptotic optimal shape parameter}
\label{subsec:preasymp_optimal_shape}

The previous three subsections focused on the asymptotically optimal shape parameter.
In this subsection we continue the discussion raised in \Cref{subsec:non_asymp_vs_asymp}:
We point out that there may be preasymptotic optimal shape parameters, 
which are however not asymptotically optimal in the sense of \Cref{def:optimal_shape}.
This phenomenon thus highlights again
that any algorithm that operates on a finite number of data points
cannot be able to detect an asymptotically optimal shape parameter.

We choose $\Omega = [0, 1]^2 \subset \R^2$ and the linear Matérn kernel,
given by the function $\Phi(x) = (1+\Vert x \Vert) \exp(-\Vert x \Vert)$.
Then we consider the top Mercer eigenfunction $\varphi_{j=1, \varepsilon=1}$,
computed numerically via the top eigenvector of a large kernel matrix based on a grid of $100 \times 100$ equidistant points \cite{santin2016approximation}.
As target functions (given by the discretization on that grid), we consider
\begin{align*}
f_1(x) &:= \varphi_{j=1, \varepsilon=1}(x), \\
f_2(x) &:= \varphi_{j=1, \varepsilon=1} + \gamma | x^{(1)} - 0.5|
\end{align*}
for $\gamma \ll 1$.
The first function $f_1$ is in the superconvergence regime, 
while $f_2$ is likely not due to the additional perturbation.
Next, we consider the approximation of these functions $f_1, f_2$ using up to 600 $P$-greedy (thus well distributed \cite{wenzel2021novel}) points (out of the $100^2$ points), using various shape parameter $\varepsilon$ between $10^{-2}$ and $10^1$.
The resulting $L_2(\Omega)$ approximation errors (computed on the grid of $100^2$ points) are displayed in \Cref{fig:conv_shape_para_2D}.

In the left plot of \Cref{fig:conv_shape_para_2D}, 
which shows the approximation of $f_1$,
one can see a similar behaviour as in \Cref{subsec:no_unique_multiple}.
In particular,
one can clearly see the dip of the $L_2(\Omega)$-error at the shape parameter $\varepsilon = 1$,
which is due to the choice of $f_1$ as $\varphi_{j=1, \varepsilon=1}$.
In the right plot of \Cref{fig:conv_shape_para_2D},
which shows tha approximation of $f_2$,
one can only see a dip of the $L_2(\Omega)$-error at $\varepsilon = 1$ for a small number of interpolation points (e.g.\ $|X| = 10$),
though no longer for a larger number of interpolation points (e.g.\ for $|X| = 600$).
This can be explained as follows:
The function $f_2$ consists of the two parts $\varphi_{j=1, \varepsilon=1}$, which allows for superconvergence and thus can be approximated quickly,
and the perturbation part $\gamma |x^{(1)} - 0.5|$.
For few interpolation points (e.g.\ $|X| = 10$),
the approximation error for both parts has small magnitude --
due to $\varphi_{j=1, \varepsilon=1}$ allowing for superconvergence and $\gamma \ll 1$ being small.
However, increasing the number of interpolations points (e.g.\ $|X| = 600$) the approximation error of $| x^{(1)} - 0.5 |$ becomes relatively smaller than $\gamma$,
thus the overall error of $\gamma |x^{(1)} - 0.5 |$ is larger than that for $\varphi_{j=1, \varepsilon=1}$,
causing the dip to start vanishing.

Thus this example highlights that a simple weighted superposition of a superconvergent part and a non-superconvergent part can lead to a situation where a seemingly optimal shape parameter (for $|X|=10$)
turns out to be no longer optimal in the asymptotic regime (e.g.\ for $|X| = 600$).

We remark that similar examples can especially also be built in the univariate setting,
where the corresponding power spaces and occuring boundary conditions can be determined more explicitly.
In order to highlight that the analysis and implications also hold beyond the univariate case,
a two dimensional example was shown,
which can even be generalized further:
Without any doubt,
similar examples can be done for more general Lipschitz domains $\Omega$ than the square.
Furthermore,
the same behaviour can be observed when replacing $\varphi_{j=1, \varepsilon=1}$ by a more general linear combination of some top eigenfunctions, 
e.g.\ $\{ \varphi_{j, \varepsilon=1} \}_{j=1, ..., 5}$.
Finally, the well distributed $P$-greedy points can also be replaced by a randomly selected subset (thus not having quasi-uniformity anymore) -- the resulting plots looks qualitatively the same.

\begin{figure}[t]
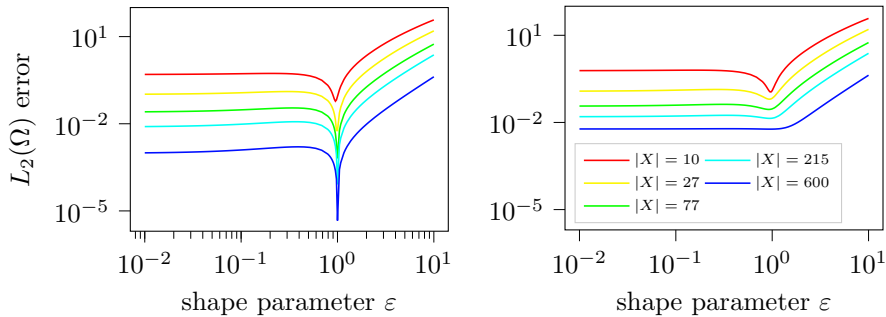

\centering
\setlength\fwidth{.5\textwidth}
\input{Figures/vis_optimal_shape_para_2D.tex} 
\input{Figures/vis_optimal_shape_para_2D_perturbed.tex} 
\caption{Visualization of the error $\Vert f - s_{f, X} \Vert_{L_2(\Omega)}$ over the shape parameter $\varepsilon$ for $\Omega = [0, 1]^2 \subset \R^2$ using various sizes of $X$.
Left: $f_1 := \varphi_{j=1, \varepsilon=1}$, i.e.\ we obtain full superconvergence when using the shape parameter $\varepsilon=1$.
Right: $f_2 := \varphi_{j=1, \varepsilon=1} + \gamma |\cdot_1-0.5|$ for $\gamma$ small.
Due to the additional non-smooth perturbation (which does not allow for superconvergence),
the dip at $\varepsilon=1$ vanishes when using many points (and thus having small errors).}
\label{fig:conv_shape_para_2D}
\end{figure}

\section{Conclusion and Outlook}
\label{sec:conclusion_outlook}

In this article we proposed a new point of view on (shape) parameter optimization for (RBF) kernel interpolation. 
Instead of optimizing these parameters for approximation on a single, finite point set $X \subset \Omega$, 
we look instead at the rate of asymptotic decay of the error on 
sequences of uniform points. 
With this perspective, and relying on recent results on sharp direct and inverse statements providing a one-to-one connection between smoothness and convergence for Sobolev kernels, we were able to 
show that there are instances where the correct choice of the parameters may lead to faster rates of approximation for a given function.
We then discussed the details, potentials and limitations of these theoretical insights, 
presenting favorable and negative examples, and highlighting a number 
of phenomena that were not discussed before in the literature: 
The possible existence of no or multiple shape parameters, and the lack of 
asymptotically optimal parameters for rough functions.

This point of view on parameter optimization could naturally lead to research in further directions. 
First, we considered the most standard case of $L_2(\Omega)$ errors for interpolation on uniform points with Sobolev kernels, but each of these assumptions could 
possibly be relaxed to consider more general settings, as outlined in~\Cref{subsec:disussion}.
Moreover, the role of the shape parameter as a kernel-optimizer could further be extended by addressing non-RBF kernels that are data- or function-optimized, 
such as in~\Cref{ex:ex2_2L_kernels} or even for VSKs.
As mentioned in~\Cref{subsec:literature_optimal_shape_para}, the flat limit is a natural direction to look at when optimizing the shape parameter $\varepsilon$ in RBF kernels. 
Although we pointed out some connections in~\Cref{subsec:no_unique_multiple}, further investigation seems necessary to understand e.g. the role of boundary 
conditions and of exact reproducibility of certain (polynomial) functions.

Finally, although our article has a theoretical viewpoint, we expect that these results will help to understand existing practical algorithms and deriving new ones
for obtaining good parameters or even optimized kernels for %
applications.

\bibliography{references}				%
\bibliographystyle{abbrv}

\appendix

\section{Additional proofs}
\label{sec:additional_proofs}

\begin{proof}[Details on proof of \Cref{th:result_direct}:]

To extend the result to any Sobolev kernel,
we let $k'$ be any translational-invariant reproducing kernel of $H^\tau(\Omega)$ and lef $s_{f, k', X}$ denote the corresponding interpolant.
Letting $\beta := \vartheta \theta$ 
we have $f \in (\ns)_\vartheta \asymp H^{\vartheta \tau}(\Omega)$,
and we can use Theorem 4.2 in \cite{narcowich2006sobolev}
to get
\begin{align*}
\Vert f - s_{f, k, X} \Vert_{L_2(\Omega)} 
&\leq \Vert f - s_{f, k, X}' \Vert_{L_2(\Omega)} + \Vert s_{f, k, X}' - s_{f, k, X} \Vert_{L_2(\Omega)}\\
&\leq C_f h_X^{\vartheta \tau} + \Vert s_{f, k, X}' - s_{f, k, X} \Vert_{L_2(\Omega)}
\end{align*}
where $C>0$ depends on $X$ only via the bound $\rho_0$ on the uniformity constant.
For the second term we use the fact that $s_{f, k, X}$ is the $k$-interpolant of $s_{f, k', X}$ on $X$,
and $s_{f, k', X} \in H^\tau(\Omega)$.
We then apply Theorem 2.12 in \cite{narcowich2005sobolev}, giving
\begin{align*}
\Vert s_{f, k', X} - s_{f, k, X} \Vert_{L_2(\Omega)} 
&= \Vert s_{f, k', X} - s_{s_{f, k', X}, k, X} \Vert_{L_2(\Omega)} \\
&\leq C' h_X^\tau \Vert s_{f, k', X} \Vert_{\ns} \\ 
&\leq C'' h_X^\tau \Vert s_{f, k', X} \Vert_{\mathcal{H}_{k'}(\Omega)},
\end{align*}
where the second inequality uses the norm-equivalences between $\ns, H^\tau(\Omega)$ and $\mathcal{H}_{k'}(\Omega)$.
Finally,
since $f \in H^{\vartheta \tau}(\Omega)$,
by Theorem 3.3 in \cite{avesani2025sobolev}\footnote{
Note that \cite[Theorem 3.3]{avesani2025sobolev} is originally formulated for a sequence of quasi-uniform points and RBF kernels.
However this can easily extended,
as e.g.\ \cite{wenzel2026sharp} extends the necessary Bernstein inequality to non-radial kernels.}
we
can bound the growth of $\Vert s_{f, k', X} \Vert_{\mathcal{H}_{k'}(\Omega)}$,
i.e.\ there is $C''' > 0$ such that
\begin{align*}
\Vert s_{f, k', X} \Vert_{\mathcal{H}_{k'}(\Omega)} \leq C''' h_X^{\beta - \tau}.
\end{align*}
All together we obtain
\begin{align*}
\Vert f - s_{f, k, X} \Vert_{L_2(\Omega)} \leq (C \Vert f \Vert_{H^\beta(\Omega)} + C'' C''') h_X^\beta,
\end{align*}
concluding the proof.

\end{proof}

\end{document}